\newtheorem{thm}{Theorem}[section]
\newtheorem{lem}[thm]{Lemma}
\newtheorem{prop}[thm]{Proposition}
\newtheorem{cor}[thm]{Corollary}
\theoremstyle{definition}
\theoremstyle{remark}
\newtheorem*{rem}{Remark}
\newtheorem*{rems}{Remarks}
\newcommand{\Lie}{\operatorname{\mathrm{Lie}}}
\newcommand{\sym}[1]{\mathfrak{S}_{#1}}
\newcommand{\Ind}{\operatorname{\mathrm{Ind}}}
\newcommand{\Res}{\operatorname{\mathrm{Res}}}
\newcommand{\F}{\mathbb{F}}
\newcommand{\Z}{\mathbb{Z}}
\begin{document}
\title{Periodic Lie modules}

\author{Kay Jin Lim}
\address[K. J. Lim]{Division of Mathematical Sciences, Nanyang Technological University, SPMS-MAS-03-01, 21 Nanyang Link, Singapore 637371.}
\email{limkj@ntu.edu.sg}
	
\author{Kai Meng Tan}
\address[K. M. Tan]{Department of Mathematics, National University of Singapore, Block S17, 10 Lower Kent Ridge Road, Singapore 119076.}
\email{tankm@nus.edu.sg}

\begin{abstract}
Let $p$ be a prime number and $k$ be a positive integer not divisible by $p$. We describe the Heller translates of the periodic Lie module $\Lie(pk)$ in characteristic $p$ and show that it has period $2p-2$ when $p$ is odd and $1$ when $p=2$.
\end{abstract}

\thanks{We thank Karin Erdmann for providing the arguments for Proposition \ref{P:just}. We are supported by Singapore Ministry of Education AcRF Tier 1 grants RG13/14 and %. The second author is supported by Singapore Ministry of Education Academic Research Fund
R-146-000-172-112 respectively.}

\maketitle

\section{Introduction}

The main difficulty in the study of modular representation theory of finite groups is that the group algebras are not only non-semisimple but also are `almost surely' of wild representation type.  Various contraptions, such as vertices and complexities, have been invented as a measure of how non-projective the modules are and to group the modules into classes.  One of the easiest and yet non-trivial classes to study are those having complexity one, which are precisely the periodic modules.  Informally, a module is periodic if and only if the terms in its minimal projective resolution run over a repeating cycle, and the period of such a module is the length of the cycle.

The current article deals with the determination of the periods and various Heller translates of some Lie modules of the symmetric groups.  The Lie modules are classical objects of study and has appeared in \cite{We} and \cite{Wi}.  They have deep relations to algebraic topology too, as homology groups in spaces related to the braid groups \cite{Co} as well as in the study of Goodwillie calculus \cite{AM}.  More recently, the Lie modules have been tied to the finest coalgebra decomposition of the tensor algebras \cite{SW}.

Throughout this article, $\mathbb{F}$ denotes an algebraically closed field of prime characteristic $p$.

Let $n \in \mathbb{Z}^+$.  The Lie module $\Lie(n)$ of the symmetric group $\sym{n}$ of $n$ letters may be defined as the left ideal of $\mathbb{F} \sym{n}$ generated by the Dynkin-Specht-Wever element
$$
w_n = (1-d_2)(1-d_3)\dotsm(1-d_n),
$$
where $d_i$ is the descending $i$-cycle $(i,i-1,\dotsc,1)$ and we compose the elements of $\sym{n}$ from right to left.

Let $k \in \mathbb{Z}^+$ with $p \nmid k$.  In \cite[Theorem 3.2]{ELT}, the complexity of $\Lie(p^mk)$ is shown to be bounded above by $m$.  Since $\Lie(pk)$ is non-projective, this in particular shows that $\Lie(pk)$ has complexity $1$ and hence is periodic.  It is therefore natural to ask what its various Heller translates are and what its period is, and we answer these questions in this article.  More precisely, we show the following:

\begin{thm} \label{T:main}
Let $k \in \mathbb{Z}^+$ with $p \nmid k$, and let $\F_{\sym{p}}$ denote the trivial $\F \sym{p}$-module.
\begin{enumerate}
\item[(i)] For any $i \in \mathbb{Z}$, we have
$$\Omega^i (\Lie(pk)) \cong \Omega^{i+1} \left(\Ind_{\sym{p} \times \sym{k}}^{\sym{pk}} \left( \F_{\sym{p}} \boxtimes \Lie(k) \right) \right) \cong \Omega^0\left(\Ind_{\sym{p} \times \sym{k}}^{\sym{pk}} \left (\Omega^{i+1} \left(\F_{\sym{p}}\right) \boxtimes \Lie(k)\right ) \right).$$
\item[(ii)] The period of $\Lie(pk)$ is equal to the period of $\F_{\sym{p}}$, i.e.\
$$
\begin{cases}
2p-2, &\text{if $p$ is odd};\\
1, &\text{if } p=2.
\end{cases}
$$
\end{enumerate}
\end{thm}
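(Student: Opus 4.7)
The plan is to establish part~(i) via a single stable isomorphism and then deduce everything else by formal manipulation; part~(ii) follows by a direct comparison of periods. The key structural input is that $\Lie(k)$ is itself \emph{projective}: since $p\nmid k$, the complexity bound in \cite[Theorem~3.2]{ELT} forces the complexity of $\Lie(k)$ to be $0$. This projectivity is what allows Heller translates to commute with both the outer tensor product $-\boxtimes\Lie(k)$ and, together with exactness of induction, with $\Ind_{\sym{p}\times\sym{k}}^{\sym{pk}}$.

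The core step, and the one I expect to be the main obstacle, is the construction of a short exact sequence of $\F\sym{pk}$-modules
$$
0 \longrightarrow \Lie(pk) \longrightarrow Q \longrightarrow \Ind_{\sym{p}\times\sym{k}}^{\sym{pk}}\bigl(\F_{\sym{p}}\boxtimes\Lie(k)\bigr) \longrightarrow 0
$$
with $Q$ projective, equivalently a stable isomorphism $\Lie(pk)\cong\Omega\bigl(\Ind_{\sym{p}\times\sym{k}}^{\sym{pk}}(\F_{\sym{p}}\boxtimes\Lie(k))\bigr)$. I would look for $Q$ as (a summand of) the $\F\sym{pk}$-module corresponding, via Schur--Weyl duality, to a suitable tensor product involving $L^{k}(V)$ and free factors $V^{\otimes\bullet}$, with the surjection arising from the natural projection onto the Lie part in a designated block. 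That the sequence is exact on the left, with kernel exactly $\Lie(pk)$, should follow from the fact that, because $p\nmid k$, the Dynkin--Specht--Wever element $w_{k}$ acts as $k$ times an idempotent picking out $\Lie(k)$ inside $\F\sym{k}$.

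Granting this short exact sequence, part~(i) follows quickly. Applying $\Omega^{i}$ stably yields the first isomorphism. For the second, I use two standard facts: induction is exact and preserves projectives, hence commutes with $\Omega$ up to projective summands; and since $\Lie(k)$ is projective, $-\boxtimes\Lie(k)$ preserves projectives, so $\Omega^{j}(M\boxtimes\Lie(k))\cong\Omega^{j}(M)\boxtimes\Lie(k)$ stably for every $\F\sym{p}$-module $M$. Combining these with the first isomorphism gives the second.

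For part~(ii), write $t$ for the period of $\F_{\sym{p}}$. The second isomorphism of~(i), together with $\Omega^{i+t}\F_{\sym{p}}\cong\Omega^{i}\F_{\sym{p}}$ stably, immediately yields $\Omega^{i+t}\Lie(pk)\cong\Omega^{i}\Lie(pk)$, so the period of $\Lie(pk)$ divides $t$. For the reverse inequality, suppose $\Omega^{s}\Lie(pk)\cong\Lie(pk)$ stably for some $0<s<t$; then (i) gives $\Ind_{\sym{p}\times\sym{k}}^{\sym{pk}}(\Omega^{s}\F_{\sym{p}}\boxtimes\Lie(k))\cong\Ind_{\sym{p}\times\sym{k}}^{\sym{pk}}(\F_{\sym{p}}\boxtimes\Lie(k))$ stably, and I would recover $\Omega^{s}\F_{\sym{p}}\cong\F_{\sym{p}}$ stably by restricting back along $\sym{p}\times\sym{k}\hookrightarrow\sym{pk}$ and isolating the appropriate Mackey summand, using indecomposability of $\Lie(k)$ as a projective $\F\sym{k}$-module. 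This contradicts the minimality of $t$. Aside from the construction of $Q$, this last extraction is the only subtle point.
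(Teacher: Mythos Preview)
Your outline for part~(i) is structurally on target: the paper also reduces everything to a single stable isomorphism $\Omega^{0}(\Lie(pk))\cong\Omega\bigl(\Ind_{\sym{p}\times\sym{k}}^{\sym{pk}}(\F\boxtimes\Lie(k))\bigr)$, and the rest follows exactly as you say from the behaviour of $\Omega$ under induction and under $-\boxtimes\Lie(k)$ with $\Lie(k)$ projective. The difference is in how that isomorphism is obtained. The paper does not construct the short exact sequence directly; it quotes the Erdmann--Schocker sequence $0\to\Lie(pk)\to\F\sym{pk}\,e\to S^{p}(\Lie(k))\to 0$ from \cite[Theorem~10]{ES}, and then spends real effort (Proposition~\ref{P:just}, using Brauer quotients of $p$-permutation modules and Green correspondence) showing that $S^{p}(\Lie(k))$ is stably isomorphic to $\Ind_{\sym{p}\times\sym{k}}^{\sym{pk}}(\F\boxtimes\Lie(k))$. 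Your Schur--Weyl sketch is too vague to stand in for this; you should either carry it out or invoke \cite{ES} as the paper does.

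Your argument for the lower bound in part~(ii) has a genuine gap. First, $\Lie(k)$ is \emph{not} indecomposable in general: the paper writes $\Lie(k)=\bigoplus_{s=1}^{m_{k}}P_{s}$ throughout (see Theorem~\ref{T: Heller}), and its indecomposable projective summands are described in \cite{SDKE}. Second, even if it were, ``isolating the identity Mackey summand'' does not work: from a stable isomorphism $\Res\Ind(\Omega^{s}\F\boxtimes\Lie(k))\cong\Res\Ind(\F\boxtimes\Lie(k))$, Krull--Schmidt only gives a bijection of indecomposable non-projective summands, and there is no reason the summand coming from the trivial double coset on one side matches the trivial-double-coset summand on the other. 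The paper handles the lower bound quite differently: it restricts $\Lambda_{k}=\Ind_{D}^{\sym{pk}}(\F\boxtimes\F_{\delta})$ all the way down to $\Delta_{k}(\sym{p})\cong\sym{p}$, performs an explicit Mackey analysis (Proposition~\ref{P:choice}, Theorem~\ref{T:Res}) to show that stably the restriction is $\F^{\oplus|\Gamma_{1}|}\oplus(\Ind_{C_{p}}^{\sym{p}}\F)^{\oplus|\Gamma_{2}|}$ with $|\Gamma_{1}|\geq 1$, and then uses the unequal multiplicities of $\F$ versus $\Omega^{2j}(\F)$ (via Proposition~\ref{P:Ind F}) to force the period to be exactly $2p-2$. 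Some version of this multiplicity-counting, not merely the presence of a single summand of the right period, is what you are missing.
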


\begin{comment}
  In this article, we provide a description of its various Heller translates (see Theorem \ref{T: Heller}) which show directly and independently of the results in \cite{ELT} that $\Lie(pk)$ is periodic (and hence has complexity $1$).  , and prove that its period
is as follows.

\begin{thm} \label{T:main}
Let $k \in \mathbb{Z}^+$ with $p \nmid k$.  Then $\Lie(pk)$ has period $\begin{cases} 2p-2, &\text{if $p$ is odd}; \\ 1, &\text{if $p=2$}. \end{cases}$
\end{thm}
\end{comment}

Note that in part (i) of Theorem \ref{T:main}, the group $\sym{p} \times \sym{k}$ is identified with the subgroup $\Delta(\sym{p}) \sym{k}^{[p]}$ of $\sym{pk}$ (see subsection 2.2 for the definition of these notations).  In fact, we give a more detailed description of the Heller translates of $\Lie(pk)$ in Theorem \ref{T: Heller}.

Our approach builds on the recent work on $\Lie(pk)$ in \cite{ES}. Following \cite[Theorem 10]{ES}, we analyse the $p$th symmetrization of $\Lie(k)$, denoted as $S^p(\Lie(k))$. More precisely, we compute the various Heller translates and period of $S^p(\Lie(k))$ and these directly imply our desired results about the Lie module $\Lie(pk)$.  This in particular provides an alternative proof that the complexity of $\Lie(pk)$ is $1$, independent of the results in \cite{ELT}.  It is easier to deal with $S^p(\Lie(k))$, instead of $\Lie(pk)$, because the former is isomorphic to $\Ind_{\sym{k} \times \sym{p}}^{\sym{pk}} (\Lie(k) \boxtimes \F)$ in the stable module category (see Proposition \ref{P:just}).

% Using \cite[Theorem 10]{ES}, we quickly reduce the problem to that of determining the period of the $p$-th symmetrization of $\Lie(k)$, denoted as $S^p(\Lie(k))$.  Since $\Lie(k)$ is projective and in fact isomorphic to the induced module $\Ind_{C_k}^{\sym{k}} \F_{\delta}$ where $\F_{\delta}$ is any one-dimensional faithful $\F C_k$-module, the analysis of $S^p(\Lie(k))$ is easier and we are able to achieve our objective, thereby proving Theorem \ref{T:main}.

This article is organised as follows: In Section \ref{S:prelim}, we gather together some background knowledge and introduce the notations that we shall need. In Section \ref{S:Heller}, we describe the Heller translates $\Omega^i(\Lie(pk))$ in terms of the Specht and dual Specht modules in the principal block of $\F\sym{p}$ and the projective indecomposable $\F\sym{k}$-modules. As a consequence, we obtain the vertices and sources of the non-projective indecomposable summands of $\Omega^i(\Lie(pk))$. The period of $\Lie(pk)$ is computed in Section \ref{S:period}.

\section{Preliminaries}\label{S:prelim}

In this section, we provide the necessary background, introduce the notations and prove some preliminary results that we shall need.

\subsection{Vertices and sources}

Let $G$ be a finite group, and let $H$ be a subgroup of $G$.  An $\F G$-module $M$ is relatively $H$-projective if and only if $M$ is a direct summand of $\Ind_H^G N$ for some $\F H$-module $N$.  Every $\F G$-module is relatively $P$-projective for any Sylow $p$-subgroup $P$ of $G$, and is projective if and only if it is relatively $\{1\}$-projective.

A vertex $Q$ of an indecomposable $\F G$-module $M$ is a subgroup of $G$ that is minimal subject to $M$ being relatively $Q$-projective. The vertices of $M$ form a single conjugacy class of $p$-subgroups of $G$.  If $M$ has vertex $Q$, with $M$ being a direct summand of $\Ind_Q^G N$ for some indecomposable $\F Q$-module $N$, then we call $N$ a $Q$-source of $M$.

Let $Q$ be a $p$-subgroup of $G$.  Green's correspondence provides a bijection between the set of isomorphism classes of the indecomposable $\F G$-modules with vertex $Q$ and and that of the indecomposable $\F N_G(Q)$-modules with vertex $Q$.  In particular, if $Q$ has order $p$, and $M$ is an indecomposable $\F G$-module with vertex $Q$, and $N$ is its Green correspondent, then
$$
\Res^G_{N_G(Q)} M \cong N \oplus P, \qquad \Ind_{N_G(Q)}^G N \cong M \oplus  R$$
for some projective modules $P$ and $R$ respectively.
%An $\F G$-module is a finite-dimensional vector space over $\F$ with a left $\F$-linear $G$ action.
%Let $M$ be an indecomposable $\F G$-module. A vertex $Q$ of $M$ is a minimal subgroup of $G$ subject to the condition that $M$ is relatively $Q$-projective, that is $M\mid \Ind^G_Q N$ for some $\F Q$-module $N$. The vertices of $M$ are $p$-subgroups and conjugate in $G$. If $N$ is also indecomposable, we say that $N$ is a $Q$-source of $M$.

\subsection{Periodic modules}

%We recall some well-known results of periodic modules.
Let $G$ be a finite group. % and $\F$ be a field of characteristic $p>0$. All $\F G$-modules are finite-dimensional.
Traditionally (see \cite[\S5.10]{B}) an $\F G$-module $M$ is periodic if and only if $\Omega^n(M) \cong M$ for some $n \in \mathbb{Z}^+$, and the least such $n$ for which this is true is the period of $M$.  However, this definition excludes modules which have non-zero projective direct summands.  As the Lie modules have large non-zero projective direct summands (see \cite{BLT} for details), this definition does not suit our purpose here.

In this paper, we say that an $\F G$-module $M$ is {\em periodic} if and only if $\Omega^n(M) \cong \Omega^0(M) \ne 0$ for some $n \in \mathbb{Z}^+$, and the least such $n$ for which this is true is the {\em period} of $M$.  Equivalently, $M$ is periodic if and only if $M$ is non-projective and there exists a long exact sequence of $\F G$-modules \[0\to \Omega^0(M) \to P_m\to \cdots\to P_1\to M\to 0\] where $P_1,\ldots,P_m$ are projectives, and the least such $m \in \mathbb{Z^+}$ for which such a long exact sequence exists is the period of $M$.

%; namely, the smallest positive integer $m$ such that \[\Omega^mM\cong M\] where $\Omega$ is the Heller translation. %From the definition, projective modules are not periodic.

%\begin{lem}\label{L:period 1} Whenever we have a long exact sequence of $\F G$-modules \[0\to M \to P_m\to \cdots\to P_1\to M\to 0\] with $P_1,\ldots,P_m$ are projectives, we have that $M$ is either projective or periodic of period dividing $m$.
%\end{lem}

We gather together some elementary results about periodic modules.

\begin{lem}\label{L:period}
Let $G$ be a finite group and let $M$ be a periodic $\F G$-module of period $n$. %\marginnote{Reference?}
\begin{enumerate}
  \item [(i)] Any finite external direct sum of the module $M$ is periodic of period $n$.
  \item [(ii)] $\Omega^aM$ is periodic of period $n$ for all $a \in \mathbb{Z}$.
  \item [(iii)] Let $H$ be a subgroup of $G$. Then $\Res^G_H M$ is either projective or periodic of period dividing $n$.
  \item [(iv)] Let $N$ be a group of which $G$ is a subgroup. Then $\Ind^N_G M$ is periodic of period dividing $n$.
\end{enumerate}
\end{lem}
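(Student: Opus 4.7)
\emph{Proof plan.} The plan is uniform across all four parts: exploit that $\Omega$ respects finite direct sums and commutes, up to projective summands, with $\Res^G_H$ and $\Ind^N_G$, and use Krull--Schmidt to convert isomorphisms modulo projectives into genuine module isomorphisms. Throughout, the defining relation $\Omega^n(M) \cong \Omega^0(M)$ is the only input about $M$.

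For (i) I would begin by distributing $\Omega^n$ over the direct sum:
\[
\Omega^n(M^{\oplus k}) \cong (\Omega^n M)^{\oplus k} \cong (\Omega^0 M)^{\oplus k} \cong \Omega^0(M^{\oplus k}),
\]
which shows that the period of $M^{\oplus k}$ divides $n$. Conversely, any candidate isomorphism $\Omega^m(M^{\oplus k}) \cong \Omega^0(M^{\oplus k})$ with $m < n$ would, by Krull--Schmidt cancellation, yield $\Omega^m M \cong \Omega^0 M$, contradicting the minimality of $n$.

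For (ii) I would apply $\Omega^a$ to both sides of $\Omega^n M \cong \Omega^0 M$ to obtain $\Omega^n(\Omega^a M) \cong \Omega^a M \cong \Omega^0(\Omega^a M)$, using that $\Omega^a M$ has no projective summands so its $\Omega^0$ is itself. For the reverse direction I would invoke the fact that $\Omega$ is invertible on the stable module category (since $\F G$ is self-injective), so any shorter cycle for $\Omega^a M$ pulls back along $\Omega^{-a}$ to a shorter cycle for $M$.

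For (iii) and (iv) the key input is that $\Res^G_H$ and $\Ind^N_G$ both send projective modules to projectives: $\Res^G_H \F G$ is $\F H$-free of rank $[G:H]$, and $\Ind^N_G \F G \cong \F N$. Applying either functor to a minimal projective resolution of $M$ therefore produces a (non-minimal) projective resolution of the image, showing that $\Omega$ commutes with these functors up to projective summands. The relation $\Omega^n M \cong \Omega^0 M$ then descends to the same relation for the image, making it periodic of period dividing $n$ whenever it is non-projective; for $\Ind^N_G$, Mackey's formula exhibits $M$ as a summand of $\Res^N_G \Ind^N_G M$, so non-projectivity of $M$ forces non-projectivity of $\Ind^N_G M$. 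The hard part, such as it is, will be mere bookkeeping: stating precisely when each ``$\cong$'' is a genuine isomorphism versus one in the stable category and invoking Krull--Schmidt to pass between them.
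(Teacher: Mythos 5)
Your proposal is correct and follows essentially the same route as the paper, whose proof simply declares (i) and (ii) clear and cites the standard fact that $\Res^G_H\Omega^n(M)\cong\Omega^n(\Res^G_H M)\oplus P$ and $\Ind_G^N\Omega^n(M)\cong\Omega^n(\Ind_G^N M)\oplus Q$ with $P,Q$ projective for (iii) and (iv). Your extra details — the Krull--Schmidt cancellation for minimality of the period in (i) and the Mackey argument showing $\Ind^N_G M$ is non-projective in (iv) — are correct and merely spell out what the paper leaves implicit.
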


\begin{proof}
  Parts (i) and (ii) are clear.  For (iii) and (iv), they follow from the fact that $\Res^G_H \Omega^n(M) \cong \Omega^n(\Res^G_H M)  \oplus P$ and $\Ind_G^N \Omega^n(M) \cong \Omega^n(\Ind_G^N M)\oplus Q$, where $P$ and $Q$ are some projective $\F H$- and $\F N$-modules respectively \cite[Proposition 4.4(vii,viii)]{Ca}.
\end{proof}

\begin{comment}
\begin{lem}\label{L: Hellervtxsrc} Let $N$ be an non-projective indecomposable $\F G$-module. Suppose that $Q$ is a vertex and $S$ is a $\F Q$-source of $N$. Then $\Omega N$ has vertex $Q$ and $\F Q$-source $\Omega S$.
\end{lem}
\begin{proof} By \cite[\S20, Proposition 7(1)]{A}, $\Omega N$ has vertex $Q$. Since $N$ is a direct summand of $\Ind_Q^G S$, we have $\Omega N$ is a direct summand of $\Omega(\Ind^G_Q S)\cong \Omega^0(\Ind^G_Q(\Omega S))$.  Since $\Omega^0(\Ind^G_Q(\Omega S))$ is a direct summand of $\Ind^G_Q(\Omega S)$, we conclude that $\Omega N$ is a direct summand of $\Ind^G_Q(\Omega S)$. Since $\Omega S$ is non-projective indecomposable $\F Q$-module, we conclude that $\Omega S$ is a source for $\Omega N$.
\end{proof}
\end{comment}

\subsection{Symmetric groups}

In this subsection, we set up the notations of some elements and subgroups of symmetric groups that shall be used in this paper.

For $n \in \mathbb{Z}^+$, the symmetric group of $n$ letters will be denoted by $\sym{n}$.

Let $a,b \in \mathbb{Z}^+$.

For $\tau \in \sym{a}$, define
$\tau^{[b]} \in \sym{ab}$ by
$$ \tau^{[b]}((i-1)b+  j)  = (\tau(i)-1)b +j
$$
for all $1 \leq i \leq a$ and $1\leq j \leq b$, so that $\tau^{[b]}$
permutes the $a$ successive blocks of size $b$ in $\{1, \dotsc, ab\}$
according to $\tau$.
Clearly, the map $\tau \mapsto \tau^{[b]}$ is an injective group homomorphism.

For $\sigma \in \sym{b}$ and $1\leq r \leq a$, define $\sigma[r] \in \sym{ab}$ by
$$
(\sigma[r])(i+ (j-1)a)  =
\begin{cases}
\sigma(i)+(r-1)a &\text{if } j = r, \\
i+(j-1)a  &\text{otherwise},
\end{cases}
$$
for all $1 \leq i \leq a$ and $1 \leq j \leq b$, so that $\sigma[r]$ acts on the $r$th successive block of size $b$ in $\{1,\dotsc, ab\}$ according to $\sigma$, and fixes everything else. In addition, define $\Delta_a (\sigma) \in \sym{ab}$ by $\Delta_a (\sigma) = \prod_{r=1}^{a} \sigma[r]$, so that $\Delta_a (\sigma)$ permutes each of the $a$ successive blocks of size $b$ in $\{1,\dotsc, ab\}$ simultaneously according to $\sigma$.  Clearly, the maps $\sigma \mapsto \sigma[r]$ and $\sigma \mapsto \Delta_a(\sigma)$ are again injective group homomorphisms.

If $H \subseteq \sym{a}$, $L \subseteq \sym{b}$ and $r \in \{1,\dotsc, a\}$, we write
\begin{align*}
H^{[b]} &= \{ h^{[b]}\ :\ h \in H \} ,\\
L[r] &= \{ \ell[r]\ :\  \ell \in L \} ,\\
\Delta_a (L) &= \{ \Delta_a(\ell)\ :\ \ell \in L \}.
\end{align*}
Note that $\Delta_a(\sym{b})$ and $\sym{a}^{[b]}$ centralises each other.

%Finally, we identify $L \times H$ and $L \wr H$ with the subgroups $\Delta_k (L) H^{[p]}$ and $(\prod_{r=1}^k L[r])H^{[p]}$ of $\sym{pk}$ respectively.

\subsection{Some miscellaneous results}

In this subsection, we state and prove three results which we shall require.  These results are most probably known and their proofs are provided here for the sake of completeness.

For any finite group $G$, we write $\F_G$ for the trivial $\F G$-module and subsequently drop the subscript $G$ if it is clear from the context.

\begin{lem} \label{L:coprimesummand}
Let $H$ be a subgroup of a group $G$ of index $\ell$ with $p \nmid \ell$.  Denote the trivial $\F G$- and $\F H$-module by $\F_G$ and $\F_H$ respectively.  Then $\F_G$ is a direct summand of $\Ind_H^G \F_H = \F G \otimes_{\F H} \F_H$ as an $\F G$-module.
\end{lem}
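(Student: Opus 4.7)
The plan is to produce an explicit splitting of $\F_G$ off $\Ind_H^G \F_H$, exploiting that $\ell$ is a unit in $\F$. By Frobenius reciprocity, $\mathrm{Hom}_{\F G}(\F_G, \Ind_H^G \F_H) \cong \mathrm{Hom}_{\F H}(\F_H, \F_H) \cong \F$, so there is (up to scalar) a unique candidate for an inclusion, and symmetrically a unique candidate for a projection. I would write both down concretely and then check that their composition is multiplication by $\ell$, which is invertible by hypothesis.

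Concretely, fix a set $\{g_1,\dotsc,g_\ell\}$ of left coset representatives of $H$ in $G$, and set
$$
\iota\colon \F_G \to \Ind_H^G \F_H, \qquad \iota(1) = \sum_{i=1}^\ell g_i \otimes 1,
$$
and
$$
\pi\colon \Ind_H^G \F_H \to \F_G, \qquad \pi(g \otimes 1) = 1 \text{ for all } g \in G.
$$
The key verifications are (a) $\iota$ is $\F G$-linear, which reduces to observing that for any $g \in G$, left multiplication by $g$ permutes the cosets $g_i H$ and hence permutes the summands $g_i \otimes 1$; (b) $\pi$ is well-defined (because $gh \otimes 1 = g \otimes 1$) and $\F G$-linear (since $G$ acts trivially on both sides); and (c) $\pi \circ \iota$ sends $1 \mapsto \ell \cdot 1$.

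The main (and only) obstacle is the hypothesis $p \nmid \ell$: it is precisely what makes $\ell$ invertible in $\F$, so that $\ell^{-1} \iota$ is a genuine $\F G$-linear section of $\pi$, exhibiting $\F_G$ as a direct summand. Everything else is bookkeeping about coset representatives and the standard identification $\Ind_H^G \F_H = \F G \otimes_{\F H} \F_H$.
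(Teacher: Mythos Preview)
Your proof is correct and essentially identical to the paper's: the paper defines the same maps $\phi(\lambda)=\frac{1}{\ell}\sum_{t\in T} t\otimes\lambda$ and $\psi(\sum_t t\otimes\mu_t)=\sum_t \mu_t$ and checks $\psi\circ\phi=\mathrm{id}$, which is your $\ell^{-1}\iota$ and $\pi$ with the factor of $\ell^{-1}$ absorbed into the inclusion rather than applied at the end.
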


\begin{proof}
Let $T$ be a complete set of left coset representatives of $H$ in $G$.
It is straightforward to check that the maps
\begin{alignat*}{2}
\phi : \F_G &\to \Ind_H^G \F_H  \qquad& \psi: \Ind_H^G \F_H &\to \F_G \\
\phi(\lambda) &= \frac{1}{\ell}\sum_{t \in T} t \otimes \lambda \qquad \qquad & \psi\left (\sum_{t \in T} t \otimes \mu_t\right ) &= \sum_{t \in T} \mu_t
\end{alignat*}
are $\F G$-module homomorphisms satisfying $\psi \circ \phi = \operatorname{id}_{\F_G}$.  Thus $\phi$ is injective, and $\Ind_H^G \F_H = \ker(\psi) \oplus \phi(\F_G)$.
\end{proof}

\begin{lem} \label{L:F}
Let $\F$ be the trivial $\F\sym{p}$-module.
\begin{enumerate}
\item [(i)] For $p=2$, $\Omega(\F)=\F$.
\item [(ii)] For $p\geq 3$ and $i\equiv j\pmod{2p-2}$ where $j\in \{0,1,\ldots,2p-3\}$, we have \[\Omega^i(\F)=\left \{\begin{array}{ll} S^{(p-j,1^j)}&\text{if $0\leq j\leq p-1$,}\\ S_{(j-p+2,1^{2p-j-2})}&\text{if $p\leq j\leq 2p-3$}.\end{array}\right .\]
\end{enumerate} In particular, the trivial $\F\sym{p}$-module $\F$ has period $\begin{cases} 2p-2, &\text{if $p$ is odd}, \\ 1, &\text{if } p=2.\end{cases}$
\end{lem}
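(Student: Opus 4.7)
The plan is to treat $p = 2$ and $p \geq 3$ separately. For $p = 2$, the group algebra $\F\sym{2}$ is a local ring of dimension two with unique simple module $\F$, so the projective cover of $\F$ has one-dimensional kernel, forcing $\Omega(\F) = \F$.

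For $p \geq 3$, I would invoke the classical fact that the principal block of $\F\sym{p}$ is a Brauer tree algebra whose Brauer tree is a straight line with $p-1$ edges and all multiplicities equal to one: writing $D_j := D^{(p-j,1^j)}$ for $0 \leq j \leq p-2$, the simples are arranged as $D_0 - D_1 - \cdots - D_{p-2}$. From this picture one reads off both the Loewy structure of each projective indecomposable $P(D_j)$ (uniserial of length three when $j \in \{0, p-2\}$, and the diamond $D_j / (D_{j-1} \oplus D_{j+1}) / D_j$ when $0 < j < p-2$) and the identification of the Specht module $S^{(p-j, 1^j)}$ for $1 \leq j \leq p-2$ as the unique length-two uniserial module with top $D_j$ and socle $D_{j-1}$, while $S^{(p)} = D_0 = \F$ and $S^{(1^p)} = D_{p-2} = \operatorname{sgn}$ are simple.

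With this setup, I would prove by induction on $j$ that $\Omega^j(\F) = S^{(p-j, 1^j)}$ for $0 \leq j \leq p-1$: given the identification for $j$, the projective cover $P(D_j) \twoheadrightarrow \Omega^j(\F)$ has kernel equal, by inspection of the Loewy structure of $P(D_j)$, to the length-two uniserial submodule with top $D_{j+1}$ and socle $D_j$, which is $S^{(p-j-1, 1^{j+1})}$; a dimension count confirms this, and the edge case $j = p-2$ produces kernel $D_{p-2} = S^{(1^p)}$.

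For $p \leq j \leq 2p-3$, I would invoke the self-duality of the symmetric algebra $\F\sym{p}$, which yields $(\Omega^i M)^* \cong \Omega^{-i}(M^*)$, together with the self-duality of every simple $\F\sym{p}$-module. Since $\Omega^{p-1}(\F) = \operatorname{sgn}$ is self-dual, one concludes $\Omega^{2p-2}(\F) \cong \F$, and the same duality gives $\Omega^j(\F) \cong (\Omega^{2p-2-j}(\F))^* = S_{(j-p+2, 1^{2p-j-2})}$ for $p \leq j \leq 2p-3$. Minimality of the period $2p-2$ follows because each $\Omega^j(\F)$ for $1 \leq j \leq 2p-3$ is visibly non-isomorphic to $\F$ (either a length-two uniserial module with non-trivial top, or the non-trivial simple $\operatorname{sgn}$). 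The main obstacle is pinning down the Loewy structure of the projective covers in the principal block; this is classical and can be cited, or derived from the decomposition matrix together with the symmetric-algebra property of $\F\sym{p}$.
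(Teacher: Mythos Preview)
Your proof is correct and follows essentially the same strategy as the paper: both walk the syzygy chain of the trivial module through the hook Specht modules in the principal block of $\F\sym{p}$. The paper simply cites as ``well-known'' the relations $\Omega(S^{(p-j,1^j)}) = S^{(p-j-1,1^{j+1})}$ and $\Omega(S_{(p-\ell,1^\ell)}) = S_{(p-\ell+1,1^{\ell-1})}$, whereas you supply the Brauer-tree/Loewy justification for the first half and then recover the dual-Specht half via the self-duality of $\F\sym{p}$ rather than by the parallel direct computation the paper invokes; this is a minor variation on the same idea.
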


\begin{proof}
For $p=2$, $\F\sym{p}$ is projective indecomposable and so $\Omega(\F) = \F$.

For $p\geq 3$ and $j=0,1,\ldots,p-1$, the Specht modules $S^{(p-j,1^j)}$ are pairwise non-isomorphic, and similarly, the dual Specht modules $S_{(p-j,1^j)}\cong (S^{(p-j,1^j)})^*$ are also pairwise non-isomorphic.  Furthermore, $S^{(p-j,1^j)} \cong S_{(p-\ell,1^\ell)}$ if and only if $j= \ell \in \{0,p-1\}$.  It is well-known that
$\Omega(S^{(p-j,1^j)}) = S^{(p-j-1,1^{j+1})}$ for $0 \leq j \leq p-2$ while $\Omega(S_{(p-\ell,1^\ell)}) = S_{(p-\ell+1,1^{\ell-1})}$ for $1 \leq \ell \leq p-1$. Thus we obtain part (ii). The final assertion is now clear.
%By part (ii), $\Omega^i(\F)\not\cong \F$ when $i=0,1,\ldots,2p-3$. Furthermore, $\Omega^{2p-2}(\F)=\Omega(S_{(p-1,1)})=S_{(p)}\cong \F$. This proves the last assertion when $p\geq 3$. The case when $p=2$ is clear.
%Thus each Specht and dual Specht module is periodic with period $2p-2$. Since $S^{(p)} = \F$, we are done.
\end{proof}

%The following result is very possibly known, especially among the experts.  But as we are unable to provide a reference, we give a proof instead.

\begin{prop} \label{P:Ind F}
Suppose that $p$ is odd, and let $C_p$ be a subgroup of $\sym{p}$ of order $p$.  Then $$\Ind_{C_p}^{\sym{p}} \F \cong \F \oplus \Omega^2(\F) \oplus \dotsb \oplus\Omega^{2p-4}(\F) \oplus P$$ for some projective $\F\sym{p}$-module $P$.
\end{prop}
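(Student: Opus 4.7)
The plan is to reduce to the normaliser $N := N_{\sym p}(C_p)$, which is the Frobenius group $C_p \rtimes C_{p-1}$ of order $p(p-1)$, and to apply Green correspondence. The proof breaks naturally into three steps.

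First I would decompose $\Ind_{C_p}^N \F$. Since $C_p$ is normal in $N$, Mackey's formula gives $\Res^N_{C_p}\Ind_{C_p}^N \F \cong \F^{\oplus(p-1)}$, so $C_p$ acts trivially on the $(p-1)$-dimensional module $\Ind_{C_p}^N \F$. It is therefore inflated from $N/C_p \cong C_{p-1}$, and since $|N/C_p|$ is coprime to $p$ it must be the regular representation of $N/C_p$. Hence
\[ \Ind_{C_p}^N \F \;\cong\; \bigoplus_{k=0}^{p-2}\lambda_k, \]
where $\lambda_0 = \F_N,\lambda_1,\ldots,\lambda_{p-2}$ are the $p-1$ one-dimensional $\F N$-modules inflated from the characters of $N/C_p$.

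Second, I would show that $\Omega^{2k}(\F_N)\cong\lambda_k$ for a suitable labeling. By Frobenius reciprocity, each $P(\lambda_k)$ equals $\Ind_{C_{p-1}}^N \lambda_k$, a $p$-dimensional module with simple head $\lambda_k$. Writing $x$ for a generator of $C_p$, $u=1-x$, and $y$ for a generator of $C_{p-1}$ acting by $yxy^{-1}=x^g$ (where $g$ is a primitive root mod $p$), the identity $yuy^{-1}=gu+O(u^2)$ shows that $y$ acts on the radical layer $u^jP(\lambda_k)/u^{j+1}P(\lambda_k)$ as multiplication by $g^{k+j}$. Consequently $P(\lambda_k)$ is uniserial of length $p$ with composition factors (from the top) $\lambda_k,\lambda_{k+1},\ldots,\lambda_{k+p-1}$ (indices modulo $p-1$). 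Then $\Omega(\lambda_k)=\mathrm{rad}\,P(\lambda_k)$ is uniserial with head $\lambda_{k+1}$ and socle $\lambda_k$, so $P(\lambda_{k+1})\twoheadrightarrow \Omega(\lambda_k)$ has one-dimensional kernel equal to the socle $\lambda_{k+1}$ of $P(\lambda_{k+1})$; this gives $\Omega^2(\lambda_k)\cong\lambda_{k+1}$, and iteration yields $\Omega^{2k}(\F_N)\cong\lambda_k$.

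Finally, since $\F_N$ is the Green correspondent of $\F_{\sym p}$ and Green correspondence commutes with the Heller operator (a standard fact), each $\lambda_k\cong\Omega^{2k}(\F_N)$ is the Green correspondent of $\Omega^{2k}(\F_{\sym p})$. The second formulation of Green correspondence for subgroups of order $p$ recalled in Subsection~2.1 then gives $\Ind_N^{\sym p}\lambda_k\cong\Omega^{2k}(\F_{\sym p})\oplus R_k$ for some projective $R_k$. Combining with transitivity of induction and step one yields
\[ \Ind_{C_p}^{\sym p}\F \;=\; \Ind_N^{\sym p}\Ind_{C_p}^N\F \;\cong\; \bigoplus_{k=0}^{p-2}\Omega^{2k}(\F_{\sym p})\;\oplus\;P \]
with $P=\bigoplus_k R_k$ projective, as required. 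The main obstacle is the second step, where one must verify the cyclic shift of simples under $\Omega^2$ in $\F N$; the rest is assembly from standard facts about induction and Green correspondence.
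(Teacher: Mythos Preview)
Your proof is correct and shares the paper's overall scaffold --- reduce to the normaliser $N = N_{\sym p}(C_p)$, decompose $\Ind_{C_p}^N \F$ there, then transport back to $\sym p$ via Green correspondence --- but the middle step is argued quite differently. The paper proceeds indirectly: since $\F_{C_p}$ has period $2$, the induced module $\Ind_{C_p}^N \F$ has period dividing $2$ (Lemma~\ref{L:period}(iv)); $\F_N$ is a summand by Lemma~\ref{L:coprimesummand}, hence every $\Omega^{2i}(\F_N)$ is a summand; these are pairwise non-isomorphic because $\F_N$ inherits period $2p-2$ from $\F_{\sym p}$ via Green correspondence and Lemma~\ref{L:F}(ii); a dimension count ($p-1$ on each side) then forces equality. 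You instead identify $\Ind_{C_p}^N \F$ concretely as the inflation of the regular $\F[N/C_p]$-module and compute $\Omega^2(\lambda_k)\cong\lambda_{k+1}$ directly from the uniserial structure of the projectives $P(\lambda_k)$. Your route is more hands-on but also more self-contained: it does not require Lemma~\ref{L:F} as input (indeed it independently recovers the period of $\F_N$), whereas the paper's argument leans on that lemma. One minor phrasing issue in your Step~1: the clause ``since $|N/C_p|$ is coprime to $p$ it must be the regular representation'' is not by itself the reason the module is regular --- the point is simply that $\Ind_{C_p}^N \F$ \emph{is} the permutation module $\F[N/C_p]$, on which $N/C_p$ acts by left translation; coprimality is only needed afterwards to split the regular module as $\bigoplus_k\lambda_k$.
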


\begin{proof}
In this proof, for a subgroup $H$ of $\sym{p}$, we denote the trivial $\F H$-module as $\F_H$.

Let $L = N_{\sym{p}}(C_p)$.  Since $\Res^{\sym{p}}_{L} \F_{\sym{p}} = \F_L$, and $\F_{\sym{p}}$ has vertex $C_p$, we see that $\F_{\sym{p}}$ and $\F_L$ are Green correspondents of each other.  Thus $\Omega^i(\F_{\sym{p}})$ and $\Omega^i(\F_L)$ are Green correspondents of each other for all $i \in \mathbb{Z}^+$ (see \cite[\S20, Proposition 7]{A}).  In particular, since Green correspondence is bijective, this shows that $\F_L$ is periodic, having the same period as $\F_{\sym{p}}$, i.e. $2p-2$ by Lemma \ref{L:F}(ii).

It is well-known that the trivial $\F C_p$-module $\F_{C_p}$ is periodic of period $2$.  Thus $\Ind_{C_p}^L \F_{C_p}$ is periodic of period $1$ or $2$ by Lemma \ref{L:period}(iv).  Since $\F_{L}$ is a direct summand of $\Ind_{C_p}^L (\F_{C_p})$ by Lemma \ref{L:coprimesummand}, this implies that $\Omega^{2i}(\F_{L})$ is a direct summand of $\Ind_{C_p}^L(\F_{C_p})$ for all $i \in \mathbb{Z}$.  Since $\F_L$ has period $2p-2$, and each $\Omega^{2i}(\F_L)$ is indecomposable (see \cite[\S20, Theorem 5(1)]{A}), we see that $\F_L \oplus \Omega^2(\F_L) \oplus \dotsb \oplus \Omega^{2p-4}(\F_L)$ is a direct summand of $\Ind_{C_p}^L \F_{C_p}$.  Thus
$$
p-1 \leq \dim_{\F} ( \F_L \oplus \Omega^2(\F_L) \oplus \dotsb \oplus \Omega^{2p-4}(\F_L)) \leq \dim_{\F} (\Ind_{C_p}^L \F_{C_p}) = p-1$$
(the last equality follows from the fact that $L = N_{\sym{p}}(C_p)$ is a semidirect product of $C_p$ with a cyclic group of order $p-1$), forcing equality throughout.  Hence
$$
\Ind_{C_p}^L \F_{C_p} \cong \F_L \oplus \Omega^2(\F_L) \oplus \dotsb \oplus \Omega^{2p-4}(\F_L).
$$
Since $\Omega^i(\F_{\sym{p}})$ is the Green correspondent of $\Omega^i(\F_L)$, we get
\begin{align*}
\Ind_{C_p}^{\sym{p}} \F_{C_p} = \Ind_{L}^{\sym{p}} (\Ind_{C_p}^L \F_{C_p}) &= \Ind_L^{\sym{p}} (\F_L \oplus \Omega^2(\F_L) \oplus \dotsb \oplus \Omega^{2p-4}(\F_L)) \\
&\cong \F_{\sym{p}} \oplus \Omega^2(\F_{\sym{p}}) \oplus \dotsb \oplus \Omega^{2p-4}(\F_{\sym{p}}) \oplus P,
\end{align*}
where $P$ is a projective $\F \sym{p}$-module.
\end{proof}

\section{Heller translates of $\Lie(pk)$}\label{S:Heller}

%In this section, we state and prove our main results.  In particular, we provide a proof of Theorem \ref{T:main} while assuming Proposition \ref{C:Res}, whose proof is deferred to the next concluding section.

In this section, we describe the Heller translates of the periodic Lie module $\Lie(pk)$ when $p\nmid k$ (see Theorem \ref{T: Heller}). Our strategy is based on an analysis of the $p$th symmetrization $S^p(\Lie(k))$ of $\Lie(k)$ that is defined in \cite{ES}. We begin with a lemma.

\begin{lem} \label{L:Lie(k)}
Let $k \in \mathbb{Z}^+$ with $p \nmid k$, and let $C_k$ be a cyclic subgroup of $\sym{k}$ generated by a $k$-cycle.  Then $\Lie(k) \cong \Ind_{C_k}^{\sym{k}} \F_{\delta}$ for any faithful one-dimensional $\F C_k$-module $\F_{\delta}$.
\end{lem}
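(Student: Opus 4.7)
The plan is to match dimensions and then exhibit the isomorphism via the action of $\sym{k}$ on the free Lie algebra inside a tensor power. Both modules have dimension $(k-1)!$: for $\Ind_{C_k}^{\sym{k}}\F_\delta$ this is just the index $[\sym{k}:C_k]$, and for $\Lie(k)$ it follows from the Dynkin--Specht--Wever identity $w_k^2 = k\cdot w_k$, which makes $w_k/k$ an idempotent of rank $(k-1)!$ in $\Z[1/k]\sym{k}$ whose left ideal base-changes flatly to $\F\sym{k}\cdot w_k = \Lie(k)$.

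To handle the ``any faithful $\F_\delta$'' clause, I would note that $N_{\sym{k}}(C_k) \cong C_k \rtimes (\Z/k)^\times$ realizes every automorphism of $C_k$ by conjugation, and hence acts transitively on the faithful one-dimensional $\F C_k$-modules. The standard isomorphism $\Ind_H^G M^g \cong \Ind_H^G M$ for $g \in N_G(H)$ then shows that $\Ind_{C_k}^{\sym{k}}\F_\delta$ does not depend on the choice of primitive $k$th root $\delta$.

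For the main isomorphism, let $V$ be an $\F$-vector space with $\dim V = n \geq k$, and identify $\Lie(k)$ with the $\sym{k}$-multilinear component of the degree-$k$ piece $L^k(V)$ of the free Lie algebra on $V$ sitting inside $V^{\otimes k}$. By the Bryant--Schocker decomposition theorem, when $p \nmid k$ this component coincides with a single block $B_k(V)$ of $V^{\otimes k}$; further results of Bryant--Danz--Johnson (equivalently \cite[Corollary~2.12]{ELT} with $m=0$) identify $B_k(V)$ with a direct sum $\bigoplus_{\delta \in \Omega}(V^{\otimes k})_\delta$ of eigenspaces of the $k$-cycle $\tau_k = (1,2,\ldots,k)$, for some non-empty set $\Omega$ of $k$th roots of unity. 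A dimension count via Witt's formula $\dim L^k(V) = \frac{1}{k}\sum_{d \mid k}\mu(d)n^{k/d}$ matched against $\dim(V^{\otimes k})_\delta$ forces $|\Omega| = 1$ with $\delta$ primitive. Taking the multilinear component of $(V^{\otimes k})_\delta$ yields exactly $\Ind_{\langle\tau_k\rangle}^{\sym{k}}\F_\delta$, because $\tau_k$ acts with eigenvalue $\delta$ on each multilinear tensor and $\sym{k}$ acts freely modulo $\langle\tau_k\rangle$ on the coset representatives.

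The main obstacle is the dimension count forcing $|\Omega| = 1$; this is where the hypothesis $p \nmid k$ is used crucially, both to secure the Bryant--Schocker identification $L^k(V) = B_k(V)$ and to ensure that a single primitive eigenspace already accounts for the full dimension of $\Lie(k)$. Once these ingredients are in place, the passage from $B_k(V)$ to $\Ind_{C_k}^{\sym{k}}\F_\delta$ through the multilinear-component functor is essentially formal.
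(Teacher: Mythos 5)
Your proof is correct, but it follows a different route from the one the paper actually prints. The paper's main argument is character-theoretic: both $\Lie(k)$ and $\Ind_{C_k}^{\sym{k}}\F_\delta$ are projective (the first because $w_k/k$ is idempotent when $p\nmid k$, the second because $p\nmid |C_k|$), hence liftable; their lifts have equal ordinary characters by Reutenauer's characteristic-zero computation, and projective modules with the same image in $K_0(\F\sym{k})$ are isomorphic. The paper's stated alternative simply quotes Klyachko's theorem $L^k(V)\cong V^{\otimes k}\otimes_{\F C_k}\F_\delta$ over the Schur algebra and applies the Schur functor. What you do instead is in effect re-derive Klyachko's isomorphism from the Bryant--Schocker summand $B_k(V)$ and the eigenspace description of it, pinning down the single eigenvalue by a Witt-formula dimension count, and then pass through the Schur functor; this is self-contained modulo the cited decomposition theorems, and it has the merit of making the role of $p\nmid k$ and the primitivity of $\delta$ completely explicit, whereas the character-theoretic proof is shorter and avoids the polynomial-functor machinery altogether. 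Your separate normalizer argument for independence of the choice of faithful $\F_\delta$ is a nice touch that the paper leaves implicit. Two small points to tighten: the dimension count needs the (standard, but worth stating) fact that among the $\tau_k$-eigenspaces of $V^{\otimes k}$ the primitive ones are exactly those of minimal dimension equal to the Witt number, so that a single non-primitive eigenspace cannot account for $\dim L^k(V)$; and the phrase ``$\tau_k$ acts with eigenvalue $\delta$ on each multilinear tensor'' should be replaced by the observation that the multilinear weight-space functor sends $(V^{\otimes k})e_\delta$ to $\F\sym{k}e_\delta\cong\Ind_{C_k}^{\sym{k}}\F_\delta$, where $e_\delta=\frac{1}{k}\sum_i\delta^{-i}\tau_k^i$ acts by right place permutation.
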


\begin{proof}
This was proved in \cite[Corollary 8.7]{R} when $\F$ has characteristic zero using the argument of ordinary characters. In prime characteristic, since both modules are projective (as is well-known), they are liftable and thus have equal ordinary characters.  Hence these modules are equal in the Grothendieck group $K_0(\F\sym{k})$ and hence isomorphic since the set of indecomposable projective modules is a $\mathbb{Z}$-basis for $K_0(\F\sym{k})$.

Alternatively, Klyachko \cite[Theorem]{K} proved that the modular Lie power $L^k(V)$ is isomorphic to $V^{\otimes k} \otimes_{\F C_k} \F_{\delta}$ as left modules for the Schur algebra $S(n,k)$, where $V$ is a $\F$-vector space of dimension $n$ and $V^{\otimes k}$ is a $(S(n,k), \F\sym{k})$-bimodule with $\sym{k}$ acting on the right by place permutations.  Applying the Schur functor when $n=k$, which sends $L^k(V)$ to $\Lie(k)$ and $V^{\otimes k}$ to $\F\sym{k}$, we get the required isomorphism.
\end{proof}

The $p$th symmetrization of $\Lie(k)$, denoted as $S^p(\Lie(k))$, is first introduced in \cite{ES} to study the Lie module $\Lie(pk)$.  It is an $\F\sym{pk}$-module, and may be defined by
$$
S^p(\Lie(k)) = \Ind_{\sym{k} \wr  \sym{p}}^{\sym{pk}} (\Lie(k)^{\otimes p})
$$
where $\sym{p}$ acts on $\Lie(k)^{\otimes p}$ by place permutation, and $\sym{k} \wr \sym{p}$ is identified with the subgroup $(\prod_{i=1}^p \sym{k}[i])\sym{p}^{[k]}$ of $\sym{pk}$.
%Combining with Lemma \ref{L:Lie(k)}, we thus have
%$$
%S^p(\Lie(k)) \cong \Ind_{\sym{p} \times C_k}^{\sym{kp}} \F \boxtimes \F_{\delta}.
%$$
This module is instrumental in the study of $\Lie(pk)$ because of the following result:

\begin{thm}[{\cite[Theorem 10]{ES}}]\label{T: Erdmann-Schocker}
Let $k \in \mathbb{Z}^+$ with $p \nmid k$.  There is a short exact sequence of left $\F\sym{pk}$-modules
$$
0 \to \Lie(pk) \to \F\sym{pk}\,e \to S^p(\Lie(k)) \to 0,
$$
where $e$ is an idempotent of $\F\sym{pk}$.
\end{thm}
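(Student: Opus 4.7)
The plan is to deduce the sequence from an analogous short exact sequence of polynomial $\GL(V)$-modules by applying the Schur functor. Let $V$ be an $\F$-vector space of dimension $n\geq pk$. The Schur functor $f$ (projection onto the weight $(1^{pk})$-space of $V^{\otimes pk}$) is exact, sends $V^{\otimes pk}$ to the regular $\F\sym{pk}$-module, and sends the Lie power $L^{pk}(V)$ to $\Lie(pk)$. Unwinding the weight $(1^{pk})$-space of $L^k(V)^{\otimes p}$ as a module induced up from $\sym{k}\wr\sym{p}$ identifies the Schur image of $S^p(L^k(V))$ with $\Ind_{\sym{k}\wr\sym{p}}^{\sym{pk}}(\Lie(k)^{\otimes p}) = S^p(\Lie(k))$. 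So it suffices to construct a short exact sequence of $(\GL(V),\F\sym{pk})$-bimodules
\[
0 \to L^{pk}(V) \to M \to S^p(L^k(V)) \to 0
\]
with $M$ a $\GL(V)$-direct summand of $V^{\otimes pk}$, because then $f(M)$ will be a direct summand of $\F\sym{pk}$ as a left module, hence of the form $\F\sym{pk}\,e$ for some idempotent $e$.

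Over a field of characteristic zero, this is immediate from the PBW decomposition $V^{\otimes pk} \cong \bigoplus_{\lambda\vdash pk} S^\lambda(L^\bullet(V))$: one simply takes $M$ to be the direct sum of the summands indexed by $\lambda=(pk)$ and $\lambda=(k^p)$. The main obstacle is pushing this construction through in characteristic $p$, where the PBW decomposition typically fails to split. One instead realises $M$ as a two-step filtered piece of $V^{\otimes pk}$ with graded quotients $L^{pk}(V)$ and $S^p(L^k(V))$, and then verifies that $M$ remains a $\GL(V)$-direct summand of $V^{\otimes pk}$ so that $f(M)$ is projective.

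The hypothesis $p \nmid k$ is crucial here: by Lemma \ref{L:Lie(k)}, $\Lie(k)$ is projective, which in turn makes the PBW-type idempotents distinguishing the $(pk)$- and $(k^p)$-components from the other partitions of $pk$ definable over $\Z_{(p)}$, so that they reduce sensibly modulo $p$. A dimension count against the characteristic-zero decomposition then confirms both the exactness of the reduced sequence and the injectivity of the map $\Lie(pk)\hookrightarrow \F\sym{pk}\,e$.
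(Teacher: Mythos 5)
First, a structural point: the paper does not prove this statement at all --- it is imported verbatim as Theorem 10 of Erdmann--Schocker \cite{ES}, so there is no internal proof to compare against. Your proposal is therefore an attempt to reprove an external result, and as written it has a genuine gap at exactly the point where all of the difficulty in \cite{ES} is concentrated.

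The gap is this: your whole argument reduces to the existence of a $(\GL(V),\F\sym{pk})$-subquotient $M$ of $V^{\otimes pk}$ which is simultaneously (a) an extension of $S^p(L^k(V))$ by $L^{pk}(V)$ and (b) a $\GL(V)$-direct summand of $V^{\otimes pk}$. Granting such an $M$, the Schur-functor step is fine; but (a) and (b) together \emph{are} the theorem, and you never construct $M$. The characteristic-zero picture is no guide here: in characteristic zero $M$ is just the split sum of the two pieces, whereas in characteristic $p$ neither $L^{pk}(V)$ nor $S^p(L^k(V))$ is individually a direct summand of $V^{\otimes pk}$ (otherwise $\Lie(pk)$, resp.\ $S^p(\Lie(k))$, would be a projective $\F\sym{pk}$-module, which it is not) --- only the nonsplit extension $M$ is, and this nonsplitting is precisely what encodes $\Omega^0(\Lie(pk))\cong\Omega(S^p(\Lie(k)))$. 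So no procedure of ``reducing PBW idempotents mod $p$'' can produce $M$; and $M$ is also not a two-step piece of the standard PBW filtration, since the layer immediately above $L^{pk}(V)$ there consists of the symmetrized Lie powers attached to \emph{two}-part partitions of $pk$, while $(k^p)$ lies in layer $p$, so isolating the $(k^p)$-component already presupposes the splitting results you are trying to establish. The assertion that projectivity of $\Lie(k)$ makes the relevant idempotents ``definable over $\Z_{(p)}$'' is a hope, not an argument: the actual proof in \cite{ES} constructs $e$ inside the Solomon descent algebra of $\sym{pk}$, using the Bryant--Schocker decomposition of $L^{pk}(V)$ and a detailed analysis of the modular structure of that algebra, and a dimension count against characteristic zero cannot substitute for this.
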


%In particular, $\Omega(S^p(\Lie(k))) \cong \Omega^0 \Lie(pk)$ and $\Omega^0 (S^p(\Lie(k))) \cong \Omega^{-1} (\Lie(pk))$.
The module $S^p(\Lie(k))$ is studied in detail in \cite{ES} in a more general setting as $S^p(U)$ where $U$ is any projective $\F\sym{k}$-module.  In particular, we have the following:

\begin{prop} \label{P:just}
Let $C_p$ be a fixed $p$-subgroup of $\sym{p}$.  Then
$$\Omega^0(S^p(\Lie(k))) \cong \Omega^0(\Ind_{\sym{k} \times N_{\sym{p}}(C_p)}^{\sym{pk}} (\Lie(k) \boxtimes \F)) \cong \Omega^0(\Ind_{\sym{k} \times \sym{p}}^{\sym{pk}} (\Lie(k) \boxtimes \F))$$
where we identify $\sym{k} \times N_{\sym{p}}(C_p)$ and $\sym{k} \times \sym{p}$ with the subgroups $\Delta_p(\sym{k}) (N_{\sym{p}}(C_p))^{[k]}$ and $\Delta_p(\sym{k})\sym{p}^{[k]}$ of $\sym{pk}$ respectively.
\end{prop}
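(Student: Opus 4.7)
The plan is to reduce to comparing inductions of one-dimensional modules from nested subgroups of $\sym{pk}$, then to identify the non-projective parts via Sylow-level computations. By Lemma~\ref{L:Lie(k)}, $\Lie(k)\cong \Ind_{C_k}^{\sym{k}}\F_\delta$ for any faithful one-dimensional $\F C_k$-module. Choose a faithful $\delta$ and apply the wreath-product identity $(\Ind_H^G M)^{\otimes p}\cong \Ind_{H\wr \sym{p}}^{G\wr \sym{p}}(M^{\otimes p})$ (as $\F[G\wr \sym{p}]$-modules, with $\sym{p}$ acting by place permutation) together with transitivity of induction to obtain
$$S^p(\Lie(k)) \;\cong\; \Ind_{C_k\wr \sym{p}}^{\sym{pk}}\bigl(\F_\delta^{\otimes p}\bigr).$$
Using instead the description $\Lie(k)\cong \Ind_{C_k}^{\sym{k}}\F_{\delta^p}$ (valid since $\delta^p$ is also faithful as $\gcd(p,k)=1$), transitivity similarly gives
$$\Ind_{\sym{k}\times \sym{p}}^{\sym{pk}}(\Lie(k)\boxtimes \F) \;\cong\; \Ind_{\Delta_p(C_k)\sym{p}^{[k]}}^{\sym{pk}}(\F_{\delta^p}\boxtimes \F),$$
and an analogous identity with $\sym{p}$ replaced by $N_{\sym{p}}(C_p)$.

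The crux of the proof is to establish, as $\F[C_k\wr \sym{p}]$-modules,
$$\Ind_{\Delta_p(C_k)\sym{p}^{[k]}}^{C_k\wr \sym{p}}(\F_{\delta^p}\boxtimes \F) \;\cong\; \F_\delta^{\otimes p}\oplus Q,$$
with $Q$ projective. Restricting to the abelian normal subgroup $\prod_{i=1}^p C_k[i]\cong C_k^p$ and applying Mackey (only one double coset occurs) yields the multiplicity-free sum of those one-dimensional characters $(\chi_1,\ldots,\chi_p)$ of $C_k^p$ whose product equals $\delta^p$, a set of cardinality $k^{p-1}$. The place-permutation action of $\sym{p}^{[k]}$ on this set has a unique fixed point $(\delta,\ldots,\delta)$, since $\chi^p=\delta^p$ forces $\chi=\delta$ when $\gcd(p,k)=1$; every other orbit has size divisible by $p$. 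A Frobenius reciprocity computation identifies the fixed line as $\F_\delta^{\otimes p}$ (with trivial $\sym{p}^{[k]}$-action). The complement, when restricted to a Sylow $p$-subgroup $C_p \leq \sym{p}^{[k]}$, carries no $C_p$-fixed character and is therefore $\F C_p$-free, hence projective over $\F[C_k\wr \sym{p}]$. The embedding $\F_\delta^{\otimes p}\hookrightarrow \Ind(\cdots)$ splits because its quotient is projective.

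Inducing this splitting to $\sym{pk}$, and noting that induction preserves projectivity, gives
$$\Ind_{\sym{k}\times \sym{p}}^{\sym{pk}}(\Lie(k)\boxtimes \F) \;\cong\; S^p(\Lie(k))\oplus (\text{projective }\F\sym{pk}\text{-module}),$$
and applying $\Omega^0$ yields the second isomorphism in the statement. For the remaining isomorphism, transitivity gives
$$\Ind_{\sym{k}\times N_{\sym{p}}(C_p)}^{\sym{pk}}(\Lie(k)\boxtimes \F) \;\cong\; \Ind_{\sym{k}\times \sym{p}}^{\sym{pk}}\bigl(\Lie(k)\boxtimes \Ind_{N_{\sym{p}}(C_p)}^{\sym{p}}\F\bigr).$$
Green correspondence—combined with the observation that $N_{\sym{p}}(C_p)\cap g N_{\sym{p}}(C_p) g^{-1}$ has trivial $p$-part for $g\notin N_{\sym{p}}(C_p)$, so every non-Green-correspondent summand of $\Ind_{N_{\sym{p}}(C_p)}^{\sym{p}}\F$ has trivial vertex—shows that $\Ind_{N_{\sym{p}}(C_p)}^{\sym{p}}\F \cong \F\oplus R$ with $R$ projective. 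Since $\Lie(k)$ is projective, $\Lie(k)\boxtimes R$ is projective over $\F[\sym{k}\times \sym{p}]$, and its induction to $\sym{pk}$ is again projective, yielding the desired stable isomorphism.

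The principal obstacle is the central step: one must both verify that the fixed line carries the trivial (and not a twisted) action of $\sym{p}^{[k]}$, and exhibit the complement as $C_p$-free. The coprimality $\gcd(p,k)=1$ is essential at both points, guaranteeing first that the only fixed character is $(\delta,\ldots,\delta)$, and second that the complementary permutation set is $C_p$-free.
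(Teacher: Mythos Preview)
Your argument is correct and takes a genuinely different route from the paper's.

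The paper works at the level of $\sym{k}\wr\sym{p}$ and relies on the theory of $p$-permutation modules: it observes (citing \cite{ES}) that $M=\Lie(k)^{\otimes p}$ is a summand of a permutation module, so each non-projective indecomposable summand $X_i$ has its Green correspondent realised as the Brauer quotient $X_i(C_p)$; it then identifies $N_{\sym{k}\wr\sym{p}}(C_p)=\sym{k}\times N_{\sym{p}}(C_p)$ and $M(C_p)\cong \Lie(k)\boxtimes\F$ (again via \cite{ES}) to obtain the first isomorphism, and deduces the second from it by Green correspondence between the trivial $\F N_{\sym{p}}(C_p)$- and $\F\sym{p}$-modules. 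Your argument reverses the order and avoids the Brauer construction entirely: you drop to the smaller group $C_k\wr\sym{p}$, compare the two inductions there by an explicit decomposition of $\Ind_{\Delta_p(C_k)\sym{p}^{[k]}}^{C_k\wr\sym{p}}(\F_{\delta^p}\boxtimes\F)$ into $C_k^p$-isotypic lines, and read off directly that the unique $\sym{p}$-fixed line is $\F_\delta^{\otimes p}$ while the rest is $\F C_p$-free. This establishes the second isomorphism first, and the first then follows by the same Green-correspondence step the paper uses (in the opposite direction).

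Your approach is more elementary and self-contained, needing only Mackey, Frobenius reciprocity, and the freeness criterion over $C_p$; the trick of replacing $\delta$ by $\delta^p$ in Lemma~\ref{L:Lie(k)} so that the diagonal character matches is a nice touch. The price is that your argument is tailored to $U=\Lie(k)$ through the explicit description $\Lie(k)\cong\Ind_{C_k}^{\sym{k}}\F_\delta$, whereas the paper's Brauer-quotient argument, being structural, applies verbatim to $S^p(U)$ for any projective $\F\sym{k}$-module $U$ as in \cite{ES}.
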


\begin{proof}
Let $M = \Lie(k)^{\otimes p}$, an $\F(\sym{k} \wr \sym{p})$-module, so that $S^p(\Lie(k)) = \Ind_{\sym{k}\wr \sym{p}}^{\sym{pk}} M$, where $\sym{k} \wr \sym{p}$ is identified with the subgroup $(\prod_{i=1}^p \sym{k}[i])\sym{p}^{[k]}$. Then
$$
M \cong (\Ind^{\sym{k}}_{C_k} \F_{\delta})^{\otimes p} \cong \Ind_{C_k \wr \sym{p}}^{\sym{k} \wr \sym{p}}  \left ((\F_{\delta})^{\otimes p}\right )$$
by Lemma \ref{L:Lie(k)} and \cite[Lemma 5]{ELT}.  In particular, $M$ is relatively $C_p$-projective since $C_p$ is the Sylow $p$-subgroup of $C_k \wr \sym{p}$.  Note also that $M$ is not projective since its dimension is not divisible by the order of a Sylow $p$-subgroup of $\sym{k} \wr \sym{p}$.

Let $$M = \left (\bigoplus_{i=1}^\ell X_i\right ) \oplus P_H,$$ where each $X_i$ is non-projective indecomposable and $P_H$ is projective.  Then each $X_i$ is relatively $C_p$-projective (since $M$ is) and hence it has a vertex $C_p$.  In addition, by \cite[proof of Lemma 13]{ES}, we also have $$\Ind_{\sym{k} \wr \sym{p}}^{\sym{pk}} X_i = Y_i \oplus Q_i$$ for some indecomposable non-projective $\F\sym{pk}$-module $Y_i$ and projective $\F\sym{pk}$-module $Q_i$.

By \cite[proof of Proposition 12]{ES}, $M$ is a summand of a permutation module .  In particular its direct summands $X_i$ are all $p$-permutation modules.  Thus each Brauer quotient $X_i(C_p)$, as an $\F N_{\sym{k}\wr \sym{p}}(C_p)$-module, is the Green correspondent of $X_i$ \cite[Ex.\ 27.4]{T}, so that $$\Ind_{N_{\sym{k} \wr \sym{p}}(C_p)}^{\sym{k} \wr \sym{p}} X_i(C_p) \cong X_i \oplus R_i$$ for some projective $\F (\sym{k} \wr \sym{p})$-module $R_i$.  Consequently, identifying an element $(\sigma_1,\dotsc,\sigma_p)\tau$ of $\sym{k} \wr \sym{p}$ with $(\prod_{i=1}^p \sigma_i[i])\tau^{[k]} \in \left(\prod_{i=1}^p \sym{k}[i]\right)\sym{p}^{[k]} \subseteq \sym{pk}$, we have
\begin{align*}
\Ind_{N_{\sym{k}\wr\sym{p}}(C_p)}^{\sym{pk}} M(C_p) &= \Ind_{N_{\sym{k} \wr\sym{p}}(C_p)}^{\sym{pk}} \left(\bigoplus_{i=1}^\ell X_i(C_p) \right) \\
&\cong \Ind_{\sym{k} \wr \sym{p}}^{\sym{pk}} \left(\bigoplus_{i=1}^\ell (X_i \oplus R_i) \right) = \left (\bigoplus_{i=1}^\ell Y_i\right ) \oplus P
\end{align*}
for some projective $\F\sym{pk}$-module $P$, so that
$$\Omega^0\left(\Ind_{N_{\sym{k}\wr\sym{p}}(C_p)}^{\sym{pk}} M(C_p)\right) \cong \bigoplus_{i=1}^\ell Y_i \cong \Omega^0\left(\Ind_{\sym{k} \wr \sym{p}}^{\sym{pk}} M\right) = \Omega^0\left(S^p(\Lie(k))\right).$$
The first isomorphism now follows, since
$$
N_{\sym{k}\wr\sym{p}}(C_p) = \sym{k} \times N_{\sym{p}}(C_p)$$
and $M(C_p)$ is isomorphic to $\Lie(k) \boxtimes \F$ as $\F (\sym{k} \times N_{\sym{p}}(C_p))$-modules \cite[between Lemma 14 and 15, and Lemma 15]{ES}.

For the second isomorphism, note first that the trivial $\F N_{\sym{p}}(C_p)$-module is the Green correspondent of the trivial $\F \sym{p}$-module by Lemma \ref{L:coprimesummand}.  Thus, for each projective indecomposable $\F \sym{k}$-module $P$, the $\F(\sym{k} \times N_{\sym{p}}(C_p))$-module $P \boxtimes \F$ is the Green correspondent of the $\F(\sym{k} \times \sym{p})$-module $P \boxtimes \F$.  Since $\Lie(k)$ is projective as an $\F \sym{k}$-module, this shows that
$$
\Omega^0\left(\Ind_{\sym{k} \times N_{\sym{p}}(C_p)}^{\sym{k} \times \sym{p}} (\Lie(k) \boxtimes \F)\right) = \Lie(k) \boxtimes \F ,$$
and the second asserted isomorphism follows.
\end{proof}

\begin{cor} \label{C:use}
Let $k \in \mathbb{Z}^+$ with $p \nmid k$.  Then
$$
\Omega^0(\Lie(pk)) \cong \Omega\left(\Ind_{\sym{p} \times \sym{k}}^{\sym{pk}} (\F \boxtimes \Lie(k))\right)
$$
where we identify $\sym{p} \times \sym{k}$ with the subgroup $\Delta_k(\sym{p}) \sym{k}^{[p]}$ of $\sym{pk}$.
\end{cor}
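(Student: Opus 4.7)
The plan is to combine two ingredients that have already been assembled: the short exact sequence of Theorem~\ref{T: Erdmann-Schocker}, and the stable-module description of $S^p(\Lie(k))$ supplied by Proposition~\ref{P:just}. Everything else is a bookkeeping check reconciling two slightly different embeddings of $\sym{p}\times\sym{k}$ inside $\sym{pk}$.

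First I would observe that $\F\sym{pk}\,e$ is projective, being a direct summand of the regular module via the idempotent decomposition $\F\sym{pk} = \F\sym{pk}\,e \oplus \F\sym{pk}\,(1-e)$. Hence the short exact sequence
$$0 \to \Lie(pk) \to \F\sym{pk}\,e \to S^p(\Lie(k)) \to 0$$
exhibits $\Lie(pk)$ as a first syzygy of $S^p(\Lie(k))$, so that in the stable category $\Omega^0(\Lie(pk)) \cong \Omega(S^p(\Lie(k)))$. Applying $\Omega$ to the first isomorphism of Proposition~\ref{P:just} (and using $\Omega\circ\Omega^0 = \Omega$) then yields
$$\Omega^0(\Lie(pk)) \cong \Omega\bigl(\Ind_{\Delta_p(\sym{k})\sym{p}^{[k]}}^{\sym{pk}} (\Lie(k)\boxtimes\F)\bigr).$$

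It remains to transport this induced module to one induced from $\Delta_k(\sym{p})\sym{k}^{[p]}$ as in the statement of the corollary. I would do this by conjugating by the ``transpose'' permutation $\pi\in\sym{pk}$ obtained from viewing $\{1,\dots,pk\}$ as a $p\times k$ array and exchanging rows with columns. A direct check on generators shows that conjugation by $\pi$ sends $\Delta_p(\sym{k})$ onto $\sym{k}^{[p]}$ and $\sym{p}^{[k]}$ onto $\Delta_k(\sym{p})$, and under the corresponding isomorphism of subgroups the $\F(\sym{k}\times\sym{p})$-module $\Lie(k)\boxtimes\F$ is carried to the $\F(\sym{p}\times\sym{k})$-module $\F\boxtimes\Lie(k)$. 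Since induction from conjugate modules on conjugate subgroups yields isomorphic induced modules, the desired isomorphism follows. The only delicate point of the proof is this final bookkeeping with $\pi$; the substantive content is already in Theorem~\ref{T: Erdmann-Schocker} and Proposition~\ref{P:just}.
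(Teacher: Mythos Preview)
Your proof is correct and follows essentially the same route as the paper's: use Theorem~\ref{T: Erdmann-Schocker} (with the observation that $\F\sym{pk}\,e$ is projective) together with Proposition~\ref{P:just} to get $\Omega^0(\Lie(pk)) \cong \Omega\bigl(\Ind_{\Delta_p(\sym{k})\sym{p}^{[k]}}^{\sym{pk}}(\Lie(k)\boxtimes\F)\bigr)$, then conjugate by an element $z\in\sym{pk}$ carrying $\Delta_p(\sym{k})\sym{p}^{[k]}$ to $\Delta_k(\sym{p})\sym{k}^{[p]}$. One small slip: the displayed module you write comes from the \emph{second} isomorphism of Proposition~\ref{P:just}, not the first.
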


\begin{proof}
By Theorem \ref{T: Erdmann-Schocker} and Proposition \ref{P:just},
$$
\Omega^0(\Lie(pk)) \cong \Omega\left(\Ind_{\sym{k} \times \sym{p}}^{\sym{pk}} (\Lie(k) \boxtimes \F)\right)
$$
where $\sym{k} \times \sym{p}$ is identified with $\Delta_p(\sym{k})\sym{p}^{[k]}$.  But $\Ind_{\sym{k} \times \sym{p}}^{\sym{pk}} (\Lie(k) \boxtimes \F)$ is isomorphic to $\Ind_{\sym{p} \times \sym{k}}^{\sym{pk}} (\F \boxtimes \Lie(k))$, where $\sym{k} \times \sym{p}$ is identified with $\Delta_p(\sym{k})\sym{p}^{[k]}$ in the former and $\sym{p} \times \sym{k}$ is identified with $\Delta_k(\sym{p})\sym{k}^{[p]}$ in the latter, via the map sending $g \otimes (x \otimes 1) \mapsto gz^{-1} \otimes (1 \otimes x)$ for $g \in \sym{pk}$ and $x \in \Lie(k)$, where $z \in \sym{pk}$ such that $z\left (\Delta_p(\sym{k})\sym{p}^{[k]}\right )z^{-1} = \Delta_k(\sym{p})\sym{k}^{[p]}$.  The corollary thus follows.
\end{proof}

\begin{rem}
In \cite{ET}, $S^p(\Lie(k))$ was defined as $\Ind_{\sym{p} \times \sym{k}}^{\sym{pk}} (\F \boxtimes \Lie(k))$ (where $\sym{p} \times \sym{k}$ is identified with the subgroup $\Delta_k(\sym{p}) \sym{k}^{[p]}$ of $\sym{pk}$).  This suited their purposes and they remarked that their definition is the `same' as that used in \cite{ES} without giving any justification.  Corollary \ref{C:use} now provides the omitted details.
\end{rem}

We note that part (i) of Theorem \ref{T:main} now follows from Corollary \ref{C:use} and \cite[Proposition 4.4(viii)]{Ca}.  In fact, we are able to give a more precise description of the Heller translates of $\Lie(pk)$ as follows:

%Let $\Lie(k) = \bigoplus_{s=1}^{m_k}P_s$ be a decomposition of $\Lie(k)$ into its indecomposable (projective) summands.
%By \cite[Theorem 2]{ES}, there is a one-to-one multiplicity-preserving correspondence between the non-projective indecomposable summands of $\Lie(pk)$ and the indecomposable summands of the projective $\Lie(k)$.  This implies that $\Lie(pk)$ and hence, for all $i\in \Z$, $\Omega^i(\Lie(pk))$ have exactly $m_k$ non-projective indecomposable summands, a fact that we shall use in the proof of the main result of this section, which we now state.

%By \cite[Theorem 2]{ES}, there is a one-to-one multiplicity-preserving correspondence between the non-projective indecomposable summands of $\Lie(pk)$ and the indecomposable summands of $\Lie(k)$. The $\F\sym{k}$-module $\Lie(k)$ is projective and its indecomposable summands have been described in \cite[\S3.3, Theorem]{SDKE}.

%Let $\Lie(k) = \bigoplus_{s=1}^{m_k}P_s$ be a decomposition of $\Lie(k)$ into its indecomposable (projective) summands. This implies that $\Lie(pk)$ and hence, for all $i\in \Z$, $\Omega^i(\Lie(pk))$ have exactly $m_k$ non-projective indecomposable summands.

%The main result of this section is as below.

%To describe $\Omega^i(\Lie(pk))$ for various $i$, it turns out that it suffices to describe those when $i=0,1,\ldots,2p-3$ when $p$ is odd and $i=0$ when $p=2$.  The main result of this section is as below.

\begin{thm}\label{T: Heller} Let $p\nmid k$ and let $\Lie(k) = \bigoplus_{s=1}^{m_k}P_s$ be a decomposition of $\Lie(k)$ into its indecomposable (projective) summands.  We identify $\sym{p}\times \sym{k}$ with the subgroup $\Delta_k(\sym{p}) \sym{k}^{[p]}$ of $\sym{pk}$.
\begin{enumerate}
%\item [(i)] For all $i=0,\ldots,p-1$, $j=p,\ldots,2p-3$ and $l=1,\ldots,m_k$, the $\F\sym{pk}$-modules $\Omega^0(\Ind_{\sym{p}\times \sym{k}}^{\sym{pk}}(S^{(p-i,1^i)}\boxtimes P_l))$ and $\Omega^0(\Ind_{\sym{p}\times \sym{k}}^{\sym{pk}}(S_{(j-p+2,1^{2p-j-2})}\boxtimes P_l))$ are indecomposable.

\item [(i)] If $p=2$ then, for all $i\in\Z$, \[\Omega^i(\Lie(2k))\cong \bigoplus_{s=1}^{m_k} \Omega^0(\Ind_{\sym{2}\times \sym{k}}^{\sym{2k}}(\F\boxtimes P_s)).\] Furthermore, for each $s\in \{1,\ldots,m_k\}$, the $\F\sym{2k}$-module $\Omega^0(\Ind_{\sym{2}\times \sym{k}}^{\sym{2k}}(\F\boxtimes P_s))$ is indecomposable.
\item [(ii)] If $p$ is odd and $i\equiv j\pmod{2p-2}$ where $j\in \{0,1,\ldots,2p-3\}$, then
$$
\Omega^i(\Lie(pk))\cong
\begin{cases}
\bigoplus_{s=1}^{m_k} \Omega^0(\Ind_{\sym{p}\times \sym{k}}^{\sym{pk}}(S^{(p-j-1,1^{j+1})}\boxtimes P_s)), &\text{if } 0 \leq j\leq p-2;\\
\bigoplus_{s=1}^{m_k} \Omega^0(\Ind_{\sym{p}\times \sym{k}}^{\sym{pk}}(S_{(j-p+3,1^{2p-j-3})}\boxtimes P_s)), &\text{if } p-1 \leq j \leq 2p-3.
\end{cases}
$$
Furthermore, for any admissible $j$, $\ell$ and $s$, the $\F\sym{pk}$-modules $$\Omega^0(\Ind_{\sym{p}\times \sym{k}}^{\sym{pk}}(S^{(p-j-1,1^{j+1})}\boxtimes P_s))\ \text{and} \ \Omega^0(\Ind_{\sym{p}\times \sym{k}}^{\sym{pk}}(S_{(\ell-p+3,1^{2p-\ell-3})}\boxtimes P_s))$$ are indecomposable.
\end{enumerate}
\end{thm}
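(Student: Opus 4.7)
The plan is to first establish the stable isomorphism
\[ \Omega^i(\Lie(pk))\cong\Omega^0\bigl(\Ind_{\sym{p}\times\sym{k}}^{\sym{pk}}(\Omega^{i+1}(\F)\boxtimes\Lie(k))\bigr) \]
for every $i\in\Z$ and then analyse the summands. I would start from Corollary \ref{C:use} and apply $\Omega^i$ to both sides: since $\Ind$ is exact and preserves projectives it commutes with $\Omega$ up to projectives, and since $\Lie(k)$ is projective as an $\F\sym{k}$-module, tensoring a minimal projective resolution of $\F$ by $\Lie(k)$ remains projective, giving $\Omega^{i+1}(\F\boxtimes\Lie(k))\cong\Omega^{i+1}(\F)\boxtimes\Lie(k)$ up to projectives. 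Lemma \ref{L:F} then identifies $\Omega^{i+1}(\F)$ explicitly ($\F$ when $p=2$; a Specht or dual Specht module determined by $i+1\bmod(2p-2)$ when $p$ is odd, with the edge case $i\equiv 2p-3$ giving $\Omega^{i+1}(\F)=\F=S_{(p)}$, matching the formula $S_{(j-p+3,1^{2p-j-3})}$ at $j=2p-3$). Distributing $\Ind$ and $\Omega^0$ over $\Lie(k)=\bigoplus_sP_s$ then yields the claimed direct sum decomposition.

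The main obstacle is indecomposability of each summand. Because every $M$ appearing in the theorem equals $\Omega^i(\F)$ for some $i$, the commutation argument above gives
\[ \Omega^0\bigl(\Ind_{\sym{p}\times\sym{k}}^{\sym{pk}}(M\boxtimes P_s)\bigr)\cong\Omega^i(U_s),\qquad U_s:=\Omega^0\bigl(\Ind_{\sym{p}\times\sym{k}}^{\sym{pk}}(\F\boxtimes P_s)\bigr), \]
and Heller translates of non-projective indecomposables are again non-projective indecomposable, so the problem reduces to showing that $U_s$ is indecomposable for each $s$. For this I would re-enter the Erdmann--Schocker machinery from the proof of Proposition \ref{P:just}. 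Writing $\Lie(k)^{\otimes p}=X_1\oplus\cdots\oplus X_\ell\oplus(\text{proj})$ as $\F(\sym{k}\wr\sym{p})$-modules, each $X_i$ is non-projective indecomposable with vertex $C_p$ (embedded as $C_p^{[k]}$), and \cite[proof of Lemma 13]{ES} yields $\Ind_{\sym{k}\wr\sym{p}}^{\sym{pk}}X_i\cong Y_i\oplus(\text{proj})$ for a non-projective indecomposable $Y_i$. The Brauer quotients $X_i(C_p)$ are Green correspondents in $\sym{k}\times N_{\sym{p}}(C_p)$, and by \cite{ES} their sum is $\Lie(k)\boxtimes\F\cong\bigoplus_s(P_s\boxtimes\F)$; since each $P_s\boxtimes\F$ is indecomposable, Krull--Schmidt forces $\ell=m_k$ and a bijection $s\mapsto i(s)$ with $X_{i(s)}(C_p)\cong P_s\boxtimes\F$.

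Running the chain of isomorphisms in the proof of Proposition \ref{P:just} summand by summand then gives $\Omega^0\bigl(\Ind_{\sym{k}\times N_{\sym{p}}(C_p)}^{\sym{pk}}(P_s\boxtimes\F)\bigr)\cong Y_{i(s)}$. Since $\F_{\sym{p}}$ and $\F_{N_{\sym{p}}(C_p)}$ are Green correspondents (via Lemma \ref{L:coprimesummand}), we have $\Ind_{N_{\sym{p}}(C_p)}^{\sym{p}}\F\cong\F\oplus(\text{proj})$, so $\Ind_{\sym{k}\times N_{\sym{p}}(C_p)}^{\sym{pk}}(P_s\boxtimes\F)\cong\Ind_{\sym{k}\times\sym{p}}^{\sym{pk}}(P_s\boxtimes\F)\oplus(\text{proj})$. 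The $\sym{pk}$-conjugation isomorphism from the proof of Corollary \ref{C:use} identifies the latter with $\Ind_{\sym{p}\times\sym{k}}^{\sym{pk}}(\F\boxtimes P_s)$, so $U_s\cong Y_{i(s)}$ is indecomposable. The $p=2$ case collapses into the same argument because $N_{\sym{2}}(C_2)=\sym{2}$, which trivialises both Proposition \ref{P:just} and Lemma \ref{L:coprimesummand}.
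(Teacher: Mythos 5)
Your proof of the displayed isomorphisms is the same as the paper's: apply Corollary \ref{C:use}, commute $\Omega$ past $\Ind$ and past $-\boxtimes P_s$ (using \cite[Proposition 4.4(viii)]{Ca} and projectivity of $P_s$), distribute over $\Lie(k)=\bigoplus_s P_s$, and read off $\Omega^{i+1}(\F)$ from Lemma \ref{L:F}; your handling of the edge case $j=2p-3$ is also correct. Where you genuinely diverge is the indecomposability of each summand. The paper reduces this to a counting argument: it quotes \cite[Theorem 2]{ES}, which says that the non-projective indecomposable summands of $\Lie(pk)$ are in multiplicity-preserving bijection with the indecomposable summands of $\Lie(k)$, so $\Omega^i(\Lie(pk))$ has exactly $m_k$ non-projective indecomposable summands; since it has been written as a direct sum of $m_k$ non-zero projective-free modules, each must be indecomposable. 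You instead reduce to the single module $U_s=\Omega^0(\Ind_{\sym{p}\times\sym{k}}^{\sym{pk}}(\F\boxtimes P_s))$ via invariance of indecomposability under Heller translation, and then re-run the Brauer-quotient/Green-correspondence machinery of Proposition \ref{P:just} summand by summand, using Krull--Schmidt on $M(C_p)\cong\bigoplus_s(P_s\boxtimes\F)$ to match each $P_s\boxtimes\F$ with an indecomposable $X_{i(s)}$ and hence identify $U_s$ with the indecomposable module $Y_{i(s)}$. Your route avoids citing \cite[Theorem 2]{ES} as a black box and yields the extra information that $U_s$ \emph{is} the Green-type correspondent $Y_{i(s)}$ attached to $P_s$ (effectively re-deriving the relevant case of that theorem from the ingredients already assembled in Proposition \ref{P:just}); the paper's counting argument is shorter and needs no explicit matching. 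Both arguments are sound.
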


\begin{proof} Since $\Lie(k) = \bigoplus_{s=1}^{m_k}P_s$, we have, by Corollary \ref{C:use} and \cite[Proposition 4.4(viii)]{Ca},
\begin{align*}
\Omega^i(\Lie(pk))&\cong \Omega^{i+1}\left(\Ind_{\sym{p} \times \sym{k}}^{\sym{pk}} (\F \boxtimes \Lie(k))\right)\\
&= \Omega^{i+1}\left(\bigoplus_{s=1}^{m_k}\Ind_{\sym{p} \times \sym{k}}^{\sym{pk}} (\F \boxtimes P_s)\right)\\
&\cong \bigoplus_{s=1}^{m_k}\Omega^0(\Ind_{\sym{p} \times \sym{k}}^{\sym{pk}} (\Omega^{i+1}(\F) \boxtimes P_s)).
\end{align*} Now the first assertions of both parts (i) and (ii) follow from Lemma \ref{L:F}.

By \cite[Theorem 2]{ES}, there is a one-to-one multiplicity-preserving correspondence between the non-projective indecomposable summands of $\Lie(pk)$ and the indecomposable summands of the projective $\F\sym{k}$-module $\Lie(k)$.  This implies that $\Lie(pk)$, and hence $\Omega^i(\Lie(pk))$ for any $i \in \Z$, each has exactly $m_k$ non-projective indecomposable summands in its decomposition into indecomposable summands.

Since $\Ind_{\sym{p} \times \sym{k}}^{\sym{pk}} (\Omega^{i+1}(\F) \boxtimes P_s)$ is non-projective (as $\Omega^{i+1}(\F)$ is non-projective), we see that $\Omega^0(\Ind_{\sym{p} \times \sym{k}}^{\sym{pk}} (\Omega^{i+1}(\F) \boxtimes P_s))\ne 0$ for all $s\in\{1,\ldots,m_k\}$. Since $\Omega^i(\Lie(pk))$ has exactly $m_k$ non-projective summands, it follows that all $\Omega^0(\Ind_{\sym{p} \times \sym{k}}^{\sym{pk}} (\Omega^{i+1}(\F) \boxtimes P_s))$ are indecomposable.
\end{proof}

We remarked that the indecomposable summands of $\Lie(k)$ have been described in \cite[\S3.3, Theorem]{SDKE}.

Theorem \ref{T: Heller}(i) implies that $\Lie(2k)$ has period $1$ when $p=2$. 
When $p\geq 3$, we may conclude directly (and independently of the results in \cite{ELT}) that $\Lie(pk)$ is periodic, and its period divides $2p-2$.  
However, it is not immediately clear that its period is exactly $2p-2$, as we have yet to determine if it is possible for $\Omega^i(\Lie(pk)) \cong \Omega^j(\Lie(pk))$ when $0\leq i,j\leq 2p-3$ with $i \ne j$. 
The following corollary shows that $\Omega^0(\Ind_{\sym{p} \times \sym{k}}^{\sym{pk}} (\Omega^{\ell}(\F) \boxtimes P_s))$ share a common vertex and fairly common sources, lending some evidence that such an undesirable isomorphism may occur. %Hence, in order to determine the period of $\Lie(pk)$ and thus proving Theorem \ref{T:main}(ii), we need another approach which is the theme of the next section.

\begin{cor}\label{C:vertexsource} For all $i\in\Z$, any indecomposable non-projective summand of the $\F\sym{pk}$-module $\Omega^i(\Lie(pk))$ has a vertex $\Delta_k(C_p)$, and $\Delta_k(C_p)$-source $\F$ when $i$ is odd and $\Omega(\F)$ when $i$ is even.
\end{cor}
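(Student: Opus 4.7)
The plan is to carry out a vertex-and-source analysis based on the description of the non-projective indecomposable summands of $\Omega^i(\Lie(pk))$ given in the proof of Theorem \ref{T: Heller}. From that proof, each such summand has the form $N_s := \Omega^0(\Ind_{\sym{p}\times\sym{k}}^{\sym{pk}}(\Omega^{i+1}(\F)\boxtimes P_s))$ for some indecomposable summand $P_s$ of the projective $\F\sym{k}$-module $\Lie(k)$, so it suffices to compute the vertex and source of each $N_s$.

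First I would establish that the $\F\sym{p}$-module $\Omega^{i+1}(\F)$ has vertex $C_p$ and $C_p$-source $\Omega^{i+1}(\F_{C_p})$. The vertex assertion holds because $\F_{\sym{p}}$ has vertex $C_p$ (it is a summand of $\Ind_{C_p}^{\sym{p}}\F_{C_p}$ by Lemma \ref{L:coprimesummand}) and Heller translates preserve vertices of non-projective indecomposables. The source assertion can be read off using Green correspondence with $L = N_{\sym{p}}(C_p)$, exactly as in the proof of Proposition \ref{P:Ind F}: the Green correspondent of $\Omega^{i+1}(\F_{\sym{p}})$ is $\Omega^{i+1}(\F_L)$, and its restriction to $C_p$ equals $\Omega^{i+1}(\F_{C_p})$ together with a projective part, whose unique non-projective constituent is the required source.

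The remaining step is to transport this information through the outer tensor product and the induction to $\sym{pk}$. Because $P_s$ is projective, it is a summand of $\F\sym{k} = \Ind_{\{1\}}^{\sym{k}}\F$, so $\Omega^{i+1}(\F)\boxtimes P_s$ is a summand of $\Ind_{C_p\times\{1\}}^{\sym{p}\times\sym{k}}\Omega^{i+1}(\F_{C_p})$. Under the identification $\sym{p}\times\sym{k}\leftrightarrow\Delta_k(\sym{p})\sym{k}^{[p]}$, the subgroup $C_p\times\{1\}$ corresponds to $\Delta_k(C_p)$, so by transitivity of induction $\Ind_{\sym{p}\times\sym{k}}^{\sym{pk}}(\Omega^{i+1}(\F)\boxtimes P_s)$ is a summand of $\Ind_{\Delta_k(C_p)}^{\sym{pk}}\Omega^{i+1}(\F_{C_p})$. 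Thus $N_s$ is relatively $\Delta_k(C_p)$-projective with source dividing $\Omega^{i+1}(\F_{C_p})$; being non-projective with $|\Delta_k(C_p)| = p$, its vertex must be exactly $\Delta_k(C_p)$, and its (indecomposable) source is $\Omega^{i+1}(\F_{C_p})$.

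Finally, since the trivial $\F C_p$-module has period $2$, one has $\Omega^{i+1}(\F_{C_p}) \cong \F$ when $i$ is odd and $\Omega^{i+1}(\F_{C_p}) \cong \Omega(\F)$ when $i$ is even, giving the stated result. The main bookkeeping subtlety will be keeping straight the two different embeddings of $\sym{p}\times\sym{k}$ into $\sym{pk}$ (via $\Delta_p(\sym{k})\sym{p}^{[k]}$ versus $\Delta_k(\sym{p})\sym{k}^{[p]}$) that appeared in linking $\Omega^0(\Lie(pk))$ to $S^p(\Lie(k))$ in Corollary \ref{C:use}, but once these are aligned no substantial obstacle arises.
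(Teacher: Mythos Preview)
Your proposal is correct and follows essentially the same route as the paper: both arguments start from the description of the non-projective summands in Theorem~\ref{T: Heller}, determine the $C_p$-source of $\Omega^{i+1}(\F_{\sym{p}})$, use that $P_s \mid \F\sym{k}$ to exhibit $N_s$ as a summand of $\Ind_{\Delta_k(C_p)}^{\sym{pk}} S_\ell$, and conclude from $|\Delta_k(C_p)|=p$ that the vertex is exactly $\Delta_k(C_p)$. The only cosmetic difference is in how the $C_p$-source of $\Omega^{i+1}(\F_{\sym{p}})$ is pinned down: the paper reads it off directly from the explicit decomposition in Proposition~\ref{P:Ind F} (and its $\Omega$-shift), whereas you compute it as $\Omega^{i+1}(\F_{C_p})$ via the commutation of restriction with Heller translates and then invoke the period~$2$ of $\F_{C_p}$ at the end; these are equivalent.
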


\begin{proof} 
By Theorem \ref{T: Heller}, we may assume that $i=0$ if $p=2$ and $i=0,1,\ldots,2p-3$ if $p\geq 3$.  In addition, any indecomposable non-projective summand of $\Omega^i(\Lie(pk))$ is of the form $\Omega^0(\Ind_{\sym{p}\times \sym{k}}^{\sym{pk}}(\Omega^{i+1}(\F)\boxtimes P_s))$ for some $s \in \{1,\ldots,m_k\}$. Let $\ell=i+1$. By Proposition \ref{P:Ind F}, $\Omega^{\ell}(\F_{\sym{p}})$ has a vertex $C_p$ and $C_p$-source $\F$ if $\ell$ is even. Also, we have \[\Omega^0\left (\Ind_{C_p}^{\sym{p}}\Omega(\F)\right )\cong \Omega\left (\Ind_{C_p}^{\sym{p}} \F\right ) \cong \Omega(\F) \oplus \Omega^3(\F) \oplus \dotsb \oplus\Omega^{2p-3}(\F).\] This shows that $\Omega^{\ell}(\F_{\sym{p}})$ has a vertex $C_p$ and $C_p$-source $\Omega(\F)$ if $\ell$ is odd. Let $S_\ell$ be the $C_p$-source of $\Omega^\ell(\F)$. Since $P_s$ is projective, and hence injective, we see that $P_s$ is a direct summand of $\F\sym{k}$.
Thus we obtain $\Omega^0(\Ind_{\sym{p}\times \sym{k}}^{\sym{pk}}(\Omega^\ell(\F)\boxtimes P_s))$ is a direct summand of $\Ind_{\sym{p}\times \sym{k}}^{\sym{pk}}\left ((\Ind_{C_p}^{\sym{p}}S_\ell)\boxtimes \F\sym{k}\right )\cong \Ind_{\Delta_k(C_p)}^{\sym{pk}}S_\ell$. This completes our proof.
\end{proof}

\begin{rem} 
In \cite[para.\ before Lemma 13]{ES}, it is shown that $C_p^{[k]}$ is an vertex of any non-projective indecomposable summand of $S^p(U)$, where $U$ is a projective $\F\sym{k}$-module. Since vertices are invariant under Heller translates, we may alternatively obtain $C_p^{[k]}$ as an vertex of any indecomposable non-projective summand of $\Omega^i(\Lie(pk))$ at once by specialising this result to the case when $U=\Lie(k)$.
\end{rem}

We end this section with a result which may be of independent interest.

\begin{cor} The Lie module $\Lie(pk)$ is an endo-$p$-permutation module.
\end{cor}
\begin{proof} By Corollary \ref{C:vertexsource} , since $\Omega^{-1}(\Lie(pk))$ is a direct sum of trivial source modules, it is a $p$-permutation module and hence an endo-$p$-permutation module. By definition, $\Lie(pk)\cong \Omega(\Omega^{-1}(\Lie(pk)))\oplus Q$ for some projective $\F\sym{pk}$-module $Q$. Since the class of endo-$p$-permutation is closed under taking Heller translations and taking direct sums with projective modules, it follows that $\Lie(pk)$ %$\Omega(\Omega^{-1}(\Lie(k)))$
is endo-$p$-permutation. %Taking direct sum with the projective module $Q$ gives an endo-$p$-permutation module $\Lie(pk)$.
\end{proof}

\section{The period of $\Lie(pk)$}\label{S:period}

In this section, we prove Theorem \ref{T:main}(ii).  Throughout this section, fix $k \in \mathbb{Z}^+$ with $p \nmid k$, $C_p$ and $C_k$ denote fixed cyclic subgroups of $\sym{p}$ and $\sym{k}$ generated by a $p$-cycle and a $k$-cycle respectively, and $\F_{\delta}$ denotes
a fixed faithful one-dimensional $\F C_k$-module.  Also, let $\Delta = \Delta_k(\sym{p})$ and $D = \Delta_k(\sym{p}) C_k^{[p]} $ ($= \sym{p} \times C_k$).

By Theorem \ref{T:main}(i) and Lemma \ref{L:F}, we already knew that $\Lie(pk)$ has period 1 when $p=2$, and that $\Lie(pk)$ has period dividing $2p-2$ when $p \geq 3$, with 
$$\Omega^i(\Lie(pk)) \cong \Omega^{i+1}\left(\Ind_{\sym{p} \times \sym{k}}^{\sym{pk}} \left(\F \boxtimes \Lie(k)\right) \right) \cong \Omega^{i+1}\left(\Ind_{\sym{p} \times C_k}^{\sym{pk}} \left(\F \boxtimes \F_{\delta} \right)\right),$$
where the last isomorphism is given by Lemma \ref{L:Lie(k)}. To complete the proof of Theorem \ref{T:main}(ii), we proceed with a careful analysis of the module 
$$\Lambda_k := \Ind_{\sym{p} \times C_k}^{\sym{pk}} \left(\F \boxtimes \F_{\delta} \right) = \Ind_{D}^{\sym{pk}} \left(\F \boxtimes \F_{\delta} \right).$$
%assuming that $p \geq 3$.

\begin{comment}
By Theorem \ref{T: Heller}, we knew that the period of $\Lie(pk)$ is $1$ when $p=2$ and bounded above by $2p-2$ when $p\geq 3$. To prove Theorem \ref{T:main}(ii), we need to show that the period of $\Lie(pk)$ is also bounded below by $2p-2$ when $p\geq 3$.

Throughout this section, fix $k \in \mathbb{Z}^+$ with $p \nmid k$, and $C_p$ and $C_k$ denote the fixed cyclic subgroups of $\sym{p}$ and $\sym{k}$ generated by a $p$-cycle and a $k$-cycle respectively, and $\F_{\delta}$ denotes
a fixed faithful one-dimensional $\F C_k$-module.  Also, let $\Delta = \Delta_k(\sym{p})$ and $D = \Delta_k(\sym{p}) C_k^{[p]} $ ($= \sym{p} \times C_k$).

Let $\Lambda_k = \Ind_{\sym{p} \times \sym{k}}^{\sym{pk}} (\F \boxtimes \Lie(k))$.  Then $\Omega(\Lambda_k) \cong \Omega^0(\Lie(pk))$ by Corollary \ref{C:use}, and
$$
\Lambda_k \cong \Ind_{D}^{\sym{pk}} (\F \boxtimes \F_{\delta})$$
by Lemma \ref{L:Lie(k)}. By Lemma \ref{L:period}(ii) and Corollary \ref{C:use}, part (ii) of Theorem \ref{T:main} is equivalent to the following:

\begin{thm} \label{T:S^p(Lie(k))}
Let $k \in \mathbb{Z}^+$ with $p \nmid k$.  Then
$\Lambda_k$ has period $\begin{cases} 2p-2, &\text{if $p$ is odd;} \\ 1, &\text{if }p=2.\end{cases}$
\end{thm}

The remainder of this section is devoted to the proof of Theorem \ref{T:S^p(Lie(k))}.  We do this by providing a complete description of the projective-free part of $\Res_{\Delta_k(\sym{p})}^{\sym{pk}} \Lambda_k$; see Theorem \ref{T:Res}.
\end{comment}

By Mackey decomposition,
$$
\Res^{\sym{pk}}_{\Delta_k(\sym{p})} \Lambda_k \cong \Res^{\sym{pk}}_{\Delta} \Ind_{D}^{\sym{pk}} (\F \boxtimes \F_{\delta}) \cong \bigoplus_{x \in \Delta \setminus \sym{pk} /D} \Ind_{xDx^{-1} \cap \Delta}^{\Delta}\, (x\otimes(\F \boxtimes \F_{\delta})).
$$

Clearly, if $p \nmid |xDx^{-1} \cap \Delta|$, then $x \otimes (\F \boxtimes \F_{\delta})$ is projective as an $\F (xDx^{-1} \cap \Delta)$-module, and hence $\Ind_{xDx^{-1} \cap \Delta}^{\Delta}\, (x\otimes(\F \boxtimes \F_{\delta}))$ is projective.  It remains to describe $\Ind_{xDx^{-1} \cap \Delta}^{\Delta}\, (x\otimes(\F \boxtimes \F_{\delta}))$ when $p \mid |xDx^{-1} \cap \Delta|$.
We begin with a preliminary result.

%Consequently, to understand $\Res^{\sym{pk}}_{\Delta} S^p(\Lie(k))$, we need to understand its summands $\Ind_{xDx^{-1} \cap \Delta}^{\Delta} {}^x\!(\F \boxtimes \F_{\delta})$.  Firstly, note that $\Ind_{xDx^{-1} \cap \Delta}^{\Delta} {}^x\!(\F \boxtimes \F_{\delta})$ is projective if and only if $p \nmid |xDx^{-1} \cap \Delta|$.  Next, when $p \mid |xDx^{-1} \cap \Delta|$, we can pick $x \in \Delta \setminus \sym{pk} /D$ carefully to make our computations easier:

\begin{prop} \label{P:choice}
Let $x \in \sym{pk}$.
\begin{enumerate}
\item[(i)] If $p \mid |xDx^{-1} \cap \Delta|$, then there exists $x_0 \in (\prod_{r=1}^k C_p[r])\sym{k}^{[p]}$ such that $\Delta x_0 D = \Delta x D$.
\item[(ii)] If $x \in (\prod_{r=1}^k C_p[r])\sym{k}^{[p]}$, say $x= yz$ where $y \in \prod_{r=1}^k C_p[r]$ and $z \in \sym{k}^{[p]}$, then $$xDx^{-1} \cap \Delta = y\Delta y^{-1} \cap \Delta =
%\item[(iii)] If $x \in \prod_{i=1}^k C_p[i]$, then
%$$ xDx^{-1} \cap \Delta = x\Delta x^{-1} \cap \Delta =
\begin{cases}
\Delta, &\text{if } y \in \Delta_k(C_p), \\
\Delta_k(C_p), &\text{otherwise.}
\end{cases}
$$
\end{enumerate}
\end{prop}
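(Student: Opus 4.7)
The plan is to prove part (ii) first, as its explicit description of the intersection will motivate the Sylow reduction in part (i). For part (ii), I would first use that $\sym{k}^{[p]}$ centralises $\Delta = \Delta_k(\sym{p})$ to write
$$xdx^{-1} = y\delta c' y^{-1}$$
for any $d = \delta c \in \Delta \cdot C_k^{[p]}$, where $c' = zcz^{-1} \in \sym{k}^{[p]}$. Parametrising $y = \prod_r \sigma_r[r]$, $\delta = \Delta_k(\rho)$, and $c' = (\tau')^{[p]}$, a direct block-by-block expansion (pushing $c'$ past each $\sigma_r^{-1}[r]$ via $c'\sigma[r](c')^{-1} = \sigma[\tau'(r)]$) shows that $xdx^{-1} \in \Delta$ forces $c' = 1$ (hence $c=1$), and subsequently forces $\sigma_s \rho \sigma_s^{-1}$ to be independent of $s$. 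This yields $xDx^{-1} \cap \Delta = y\Delta y^{-1} \cap \Delta$. The second equality follows by a short case analysis: when $y \in \Delta_k(C_p)$ all $\sigma_s$ coincide and the condition is vacuous, giving $\Delta$; otherwise the subgroup of $C_p$ generated by the $\sigma_1^{-1}\sigma_s$ is nontrivial, hence equals $C_p$ (as $p$ is prime), forcing $\rho \in C_{\sym{p}}(C_p) = C_p$ (a $p$-cycle is self-centralising in $\sym{p}$), and yielding $\Delta_k(C_p)$.

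For part (i), set $P = \Delta_k(C_p)$. Then $P$ is a Sylow $p$-subgroup of both $\Delta$ (of order $p!$) and $D = \Delta \cdot C_k^{[p]}$ (of order $p!k$ with $p \nmid k$). The hypothesis $p \mid |xDx^{-1} \cap \Delta|$ yields a subgroup of order $p$ inside the intersection, which is Sylow in $\Delta$; by Sylow's theorem within $\Delta$ we may left-multiply $x$ by an element of $\Delta$ to arrange $P \subseteq xDx^{-1}$, equivalently $x^{-1}Px \subseteq D$. A second application of Sylow's theorem, now inside $D$, supplies $d \in D$ with $d^{-1}x^{-1}Pxd = P$, so right-multiplying $x$ by $d$ further arranges $x \in N_{\sym{pk}}(P)$. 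Conjugation by $x$ then induces an automorphism $c_0 \mapsto c_0^a$ of $P = \langle \Delta_k(c_0) \rangle$; since $N_{\sym{p}}(C_p)/C_p$ has order $p-1$ and surjects onto $\mathrm{Aut}(C_p)$, there exists $\rho \in N_{\sym{p}}(C_p)$ with $\Delta_k(\rho)x$ centralising $P$. Finally, because $\Delta_k(c_0)$ has cycle type $(p^k)$ in $\sym{pk}$, the standard formula for centralisers of permutations identifies $C_{\sym{pk}}(P)$ with $(\prod_{r=1}^k C_p[r])\sym{k}^{[p]}$; hence $x_0 := \Delta_k(\rho)x$ is the desired double-coset representative.

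The main obstacle I foresee is the bookkeeping in part (ii), particularly tracking how $c'$ commutes past the blockwise factors of $y^{-1}$ to produce the constraint identifying $xDx^{-1} \cap \Delta$ with $y\Delta y^{-1} \cap \Delta$; once this reduction is in place, both parts follow from standard Sylow theory and well-known centraliser formulas in symmetric groups.
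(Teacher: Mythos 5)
Your proposal is correct, and part (ii) follows essentially the same path as the paper: write $x=yz$, use that $z\in\sym{k}^{[p]}$ centralises $\Delta$ to separate the $\prod_r\sym{p}[r]$-part from the $\sym{k}^{[p]}$-part and force $\tau=1$, then run the dichotomy on whether the blockwise entries $y_r\in C_p$ of $y$ all coincide, using $C_{\sym{p}}(y_i^{-1}y_j)=C_p$ when they do not. (Do record the easy reverse inclusions $y\Delta y^{-1}\cap\Delta\subseteq xDx^{-1}\cap\Delta$ and $\Delta_k(C_p)\subseteq y\Delta y^{-1}\cap\Delta$ explicitly, as the paper does.) For part (i) your route is genuinely different in mechanism though not in destination: the paper picks an order-$p$ element $a=\Delta_k(\sigma)=x\Delta_k(\rho)x^{-1}$ and conjugates the two $p$-cycles $\sigma$ and $\rho$ to the fixed generator $\pi$ in one step via $\xi,\eta\in\sym{p}$, landing $x_0=\Delta_k(\xi^{-1})x\Delta_k(\eta)$ directly in $C_{\sym{pk}}(\Delta_k(C_p))$; you instead apply Sylow conjugacy twice (once in $\Delta$, once in $D$, both of which have Sylow $p$-subgroups of order $p$) to put $x$ in $N_{\sym{pk}}(\Delta_k(C_p))$, and then spend an extra step correcting the induced automorphism of $\Delta_k(C_p)$ using the surjection $N_{\sym{p}}(C_p)\twoheadrightarrow\operatorname{Aut}(C_p)$. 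Both arguments terminate at the same identification $C_{\sym{pk}}(\Delta_k(C_p))=(\prod_{r=1}^k C_p[r])\sym{k}^{[p]}$; the paper's version is shorter because conjugating $\sigma$ and $\rho$ simultaneously to $\pi$ absorbs the normaliser-versus-centraliser correction, while yours is more systematic and would generalise to settings where the relevant Sylow subgroup is not generated by a single explicit cycle.
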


\begin{proof} \hfill
\begin{enumerate}
\item[(i)] Let $a \in xDx^{-1} \cap \Delta$ be an element of order $p$.  Then $a = \Delta_k(\sigma) = x\Delta_k(\rho)\tau^{[p]}x^{-1}$ for some $\sigma, \rho \in \sym{p}$ and $\tau \in C_k$.  Since $a$ has order $p$, while the order of $x\Delta_k(\rho)\tau^{[p]}x^{-1}$ is the least common multiple of those of $\rho$ and $\tau$, and $\tau$ has order coprime to $p$, we see that $\tau = 1$ and $\rho$ has order $p$.  Let $\pi$ be a generator of $C_p$.  Then $\pi$, $\sigma$ and $\rho$ are $p$-cycles in $\sym{p}$, so there exist $\xi, \eta \in \sym{p}$ such that $\sigma = \xi\pi\xi^{-1}$ and $\rho =\eta \pi \eta^{-1}$.  Thus
$$
\Delta_k(\xi) \Delta_k(\pi) \Delta_k(\xi)^{-1} = \Delta_k(\sigma) = x \Delta_k(\rho) x^{-1} =  x \Delta_k(\eta) \Delta_k(\pi) \Delta_k(\eta)^{-1} x^{-1}.
$$
Let $x_0 = \Delta_k(\xi^{-1}) x \Delta_k(\eta)$.  Then $\Delta x_0 D = \Delta x D$, and
$\Delta_k(\pi) = x_0 \Delta_k(\pi) x_0^{-1}$, so that $x_0 \in C_{\sym{pk}}(\Delta_k(C_p)) = (\prod_{r=1}^k C_p[r]) \sym{k}^{[p]}$.
\item[(ii)] Let $a \in xDx^{-1} \cap \Delta$.  Then $a = \Delta_k(\sigma) = x\Delta_k(\rho)\tau^{[p]}x^{-1}$ for some $\sigma, \rho \in \sym{p}$ and $\tau \in C_k$.  Since $z$ commutes with $\Delta_k(\rho)$, we have
    $$
    \prod_{r=1}^k \sym{p}[r] \ni \Delta_k(\rho)^{-1} y^{-1}\Delta_k(\sigma) y = z \tau^{[p]} z^{-1} \in \sym{k}^{[p]},$$
    so that $\tau = 1$ and $a= \Delta_k(\sigma) = y \Delta_k(\rho) y^{-1}$.  Thus $a \in \Delta \cap y\Delta y^{-1}$.        Conversely, $x \Delta x^{-1} = y \Delta y^{-1}$ since $z$ centralises $\Delta$, so that
    $$
    y \Delta y^{-1} \cap \Delta = x\Delta x^{-1} \cap \Delta \subseteq xDx^{-1} \cap \Delta.$$
    This proves the first equality.

    Now we prove the second equality.  If $y \in \Delta_k(C_p)$, then $$y\Delta y^{-1} \cap \Delta = \Delta \cap \Delta = \Delta.$$  Assume thus $y \notin \Delta_k(C_p)$.  Let $y = \prod_{r=1}^k y_r[r]$ where $y_r \in C_p$ for all $r$.  By assumption, $y_i \ne y_j$ for some $i$ and $j$. Let $a \in y\Delta y^{-1} \cap \Delta$.  Then $a = \Delta_k(\sigma) = y\Delta_k(\rho)y^{-1}$ for some $\sigma, \rho \in \sym{p}$.  Thus $y_i \rho y_i^{-1} = \sigma = y_j\rho y_j^{-1}$, so that $\rho \in C_{\sym{p}}(y_i^{-1}y_j) = C_p$.  Hence, $a = y \Delta_k(\rho)y^{-1} = \Delta_k(\rho) \in \Delta_k(C_p)$.  Consequently, $y\Delta y^{-1} \cap \Delta \subseteq \Delta_k(C_p)$.  The reverse inclusion is clear.
\end{enumerate}
\end{proof}

\begin{thm} \label{T:Res}
%Keep the notations used in Proposition \ref{P:choice}.
Let
\begin{align*}
 \Gamma_1 &= \{ x \in \Delta \!\setminus\! \sym{pk}/ D\ :\  xDx^{-1} \cap \Delta = \Delta \}, \\
 \Gamma_2 &= \{ x \in \Delta \!\setminus\! \sym{pk}/ D\ :\  |xDx^{-1} \cap \Delta| = p \}.
\end{align*}
  Then
\begin{align*}
\Res_{\Delta_k(\sym{p})}^{\sym{pk}} \Lambda_k &\cong \F^{\oplus |\Gamma_1|} \oplus (\Ind_{C_p}^{\sym{p}} \F)^{\oplus |\Gamma_2|} \oplus P\\
&\cong \F^{\oplus (|\Gamma_1|+|\Gamma_2|)}\oplus \Omega^2(\F)^{\oplus |\Gamma_2|}\oplus \Omega^4(\F)^{\oplus |\Gamma_2|}\oplus\cdots\oplus \Omega^{2p-4}(\F)^{\oplus |\Gamma_2|}\oplus P
\end{align*}
for some projective $\F\Delta_k(\sym{p})$-module $P$.  Here, $\Ind_{C_p}^{\sym{p}} \F$ is an $\F\Delta_k(\sym{p})$-module via the isomorphism $\sym{p} \cong \Delta_k(\sym{p})$.
\end{thm}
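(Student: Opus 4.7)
The plan is to apply Mackey's decomposition to $\Res^{\sym{pk}}_{\Delta} \Lambda_k = \Res^{\sym{pk}}_{\Delta} \Ind_{D}^{\sym{pk}} (\F \boxtimes \F_{\delta})$, giving
\[
\Res^{\sym{pk}}_{\Delta} \Lambda_k \cong \bigoplus_{x \in \Delta \backslash \sym{pk} / D} \Ind_{xDx^{-1}\cap \Delta}^{\Delta} \bigl({}^x(\F \boxtimes \F_{\delta})\bigr),
\]
and then to classify each summand according to the structure of the intersection $xDx^{-1}\cap \Delta$.

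Double cosets with $p \nmid |xDx^{-1}\cap \Delta|$ can be disposed of immediately: the one-dimensional module ${}^x(\F \boxtimes \F_{\delta})$ is projective over the semisimple algebra $\F(xDx^{-1}\cap \Delta)$, so its induction to $\Delta$ is projective and is absorbed into $P$.

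For the remaining double cosets, Proposition \ref{P:choice}(i) allows me to choose a representative of the form $x = yz$ with $y \in \prod_{r=1}^{k} C_p[r]$ and $z \in \sym{k}^{[p]}$, and Proposition \ref{P:choice}(ii) then shows that $xDx^{-1} \cap \Delta$ is either $\Delta$ (when $y \in \Delta_k(C_p)$, corresponding to $x \in \Gamma_1$) or $\Delta_k(C_p)$ of order $p$ (otherwise, corresponding to $x \in \Gamma_2$). The key verification at this stage is that the twisted module ${}^x(\F \boxtimes \F_{\delta})$ restricts to the trivial module on $xDx^{-1} \cap \Delta$. This is in fact already contained in the proof of Proposition \ref{P:choice}(ii): for every $a \in xDx^{-1}\cap \Delta$, one has $x^{-1}ax = \Delta_k(\rho) \in \sym{p} \times \{1\} \subseteq D$ (the $C_k^{[p]}$-component of $x^{-1}ax$ is forced to be trivial), so the twisted action factors through the $\sym{p}$-coordinate, on which $\F \boxtimes \F_{\delta}$ is trivial. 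Consequently, each $x \in \Gamma_1$ contributes one copy of the trivial $\F\Delta$-module $\F$, while each $x \in \Gamma_2$ contributes $\Ind_{\Delta_k(C_p)}^{\Delta} \F \cong \Ind_{C_p}^{\sym{p}} \F$ via the isomorphism $\Delta \cong \sym{p}$, yielding the first displayed decomposition.

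The second line of the theorem then follows by substituting the description of $\Ind_{C_p}^{\sym{p}} \F$ from Proposition \ref{P:Ind F} (for $p$ odd) as $\F \oplus \Omega^2(\F) \oplus \cdots \oplus \Omega^{2p-4}(\F)$ plus a projective summand, the latter being absorbed into $P$; for $p=2$ the sum over the nontrivial Heller shifts is empty and $\Ind_{C_p}^{\sym{p}} \F = \F$ since $C_p = \sym{p}$, so the formula collapses consistently. The only genuinely technical step is the verification of the trivial twisted action at the intersections, but this is already subsumed by the calculations in Proposition \ref{P:choice} and so presents no real obstacle.
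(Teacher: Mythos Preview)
Your proof is correct and follows essentially the same route as the paper: Mackey decomposition, discard the summands with $p \nmid |xDx^{-1}\cap\Delta|$ as projective, use Proposition~\ref{P:choice} to identify the remaining intersections as either $\Delta$ or $\Delta_k(C_p)$, and finish with Proposition~\ref{P:Ind F}. Your explicit verification that the twisted module ${}^x(\F\boxtimes\F_\delta)$ is trivial on the intersection (because the $C_k^{[p]}$-component of $x^{-1}ax$ vanishes) is a detail the paper leaves implicit, so your write-up is in fact slightly more complete.
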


\begin{proof}
By Mackey decomposition,
$$
\Res^{\sym{pk}}_{\Delta} \Lambda_k \cong %\Res^{\sym{pk}}_{\Delta} \Ind_{D}^{\sym{pk}} (\F \boxtimes \F_{\delta}) \cong
\bigoplus_{x \in \Delta \setminus \sym{pk} /D} \Ind_{xDx^{-1} \cap \Delta}^{\Delta}\, (x \otimes (\F \boxtimes \F_{\delta})).
$$
We have already seen that $\Ind_{xDx^{-1} \cap \Delta}^{\Delta} (x\otimes (\F \boxtimes \F_{\delta}))$ is projective if $p \nmid |xDx^{-1} \cap \Delta|$.  Note that the statement of our theorem involves only the cardinality of $\Gamma_1$ and $\Gamma_2$ which are independent of the actual choice of double coset representatives $\Delta \!\setminus\! \sym{pk}/ D$.  As such we may assume that, by Proposition \ref{P:choice}(i), whenever $x \in \Delta \!\setminus\! \sym{pk}/ D$ such that $p \mid |xDx^{-1} \cap \Delta|$, we have $x \in (\prod_{r=1}^k C_p[r]) \sym{k}^{[p]}$.  Thus, by Proposition \ref{P:choice}(ii), we have
\begin{align*}
\Res^{\sym{pk}}_{\Delta} \Lambda_k &\cong \bigoplus_{x \in \Gamma_1} \Ind_{\Delta}^{\Delta}\, (x\otimes(\F \boxtimes \F_{\delta})) \oplus \bigoplus_{x \in \Gamma_2} \Ind_{\Delta_k(C_p)}^{\Delta}\, (x\otimes(\F \boxtimes \F_{\delta})) \oplus P \\
&\cong \F^{\oplus |\Gamma_1|} \oplus (\Ind_{C_p}^{\sym{p}} \F)^{\oplus |\Gamma_2|} \oplus P
\end{align*}
for some projective $\F\Delta$-module $P$. The final isomorphism is obtained by using Proposition \ref{P:Ind F}.
\end{proof}

Keeping the notations of Theorem \ref{T:Res}, we may assume that $1 \in \Gamma_1$.  Thus $\Gamma_1 \ne \emptyset$. Using Theorem \ref{T:Res} and Lemma \ref{L:F}, we obtain the follow corollary.

%If $p=2$, then $C_p = \sym{p}$, and thus $\Res_{\Delta_k(\sym{p})}^{\sym{pk}} \Lambda_k$ has period $1$ by Theorem \ref{T:Res} and Lemma \ref{L:F}.

%If $p$ is odd, then $\F$ has period $2p-2$ by Lemma \ref{L:F} while $\Ind_{C_p}^{\sym{p}} \F$ has period $2$ by Proposition \ref{P:Ind F}.  Since $\Gamma_1$ is non-empty, we see that $\Res_{\Delta_k(\sym{p})}^{\sym{pk}} \Lambda_k$ has period $2p-2$ by Theorem \ref{T:Res}. This proves the following corollary.

\begin{cor} \label{C:Res}
Let $k \in \mathbb{Z}^+$ with $p \nmid k$.  Then
$\Res^{\sym{pk}}_{\Delta_k(\sym{p})} \Lambda_k$ has the same period as the trivial $\F \sym{p}$-module $\F$.
\end{cor}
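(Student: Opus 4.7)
The plan is to read off the period directly from the multiplicities in the decomposition provided by Theorem \ref{T:Res}. For $p \geq 3$, writing $a_0 = |\Gamma_1|+|\Gamma_2|$ and $a_i = |\Gamma_2|$ for $1 \leq i \leq p-2$, we have
$$\Res^{\sym{pk}}_{\Delta_k(\sym{p})} \Lambda_k \cong \bigoplus_{i=0}^{p-2} \Omega^{2i}(\F)^{\oplus a_i} \oplus P$$
for some projective $P$, while for $p=2$ the sum reduces to $\F^{\oplus |\Gamma_1|} \oplus P$. In either case, the crucial fact is that $1 \in \Gamma_1$, so $a_0 \geq 1$ and the module is non-projective.

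The case $p=2$ is immediate: since $\Omega(\F) \cong \F$ by Lemma \ref{L:F}(i), the restriction has period $1$, matching the period of $\F$.

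For $p \geq 3$, the upper bound is easy: by Lemma \ref{L:F}(ii) and Lemma \ref{L:period}(i,ii), each direct summand $\Omega^{2i}(\F)$ is periodic of period $2p-2$, and hence so is their direct sum, yielding period dividing $2p-2$. For the lower bound, suppose $\Omega^n(\Res^{\sym{pk}}_{\Delta_k(\sym{p})} \Lambda_k) \cong \Res^{\sym{pk}}_{\Delta_k(\sym{p})} \Lambda_k$ in the stable category for some $1 \leq n \leq 2p-3$. Applying $\Omega^n$ termwise, this gives
$$\bigoplus_{i=0}^{p-2} \Omega^{2i+n}(\F)^{\oplus a_i} \cong \bigoplus_{i=0}^{p-2} \Omega^{2i}(\F)^{\oplus a_i}$$
in the stable category. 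By Lemma \ref{L:F}(ii), the modules $\Omega^{j}(\F)$ for $0 \leq j \leq 2p-3$ are pairwise non-isomorphic indecomposable non-projective modules (they are distinct Specht or dual Specht modules). By the Krull--Schmidt theorem applied to the stable category, $n$ must be even; writing $n = 2m$ with $1 \leq m \leq p-2$, comparing multiplicities forces $a_i = a_{i+m}$ for all $i$, with indices read modulo $p-1$.

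The main (and only) obstacle is to exclude $1 \leq m \leq p-2$, and this is where the condition $1 \in \Gamma_1$ from just before the corollary comes in: it yields the strict inequality $a_0 = |\Gamma_1|+|\Gamma_2| > |\Gamma_2| = a_m$, contradicting $a_0 = a_m$. Hence no such $n$ exists, and $\Res^{\sym{pk}}_{\Delta_k(\sym{p})} \Lambda_k$ has period exactly $2p-2$, matching the period of $\F$ given by Lemma \ref{L:F}(ii).
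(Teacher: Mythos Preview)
Your proof is correct and follows essentially the same approach as the paper: the paper simply cites Theorem \ref{T:Res}, Lemma \ref{L:F}, and the observation that $\Gamma_1 \ne \emptyset$, and you have supplied precisely the Krull--Schmidt multiplicity comparison that these citations implicitly invoke. One cosmetic point: for $p=2$ the restriction is a (nonzero) direct sum of copies of $\F$ plus a projective (not literally $\F^{\oplus |\Gamma_1|}$, since $\Ind_{C_2}^{\sym{2}}\F=\F$ contributes as well), but this does not affect your argument.
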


We are now ready to prove Theorem \ref{T:main}(ii).

\begin{proof}[Proof of %Theorem \ref{T:S^p(Lie(k))}, and hence 
Theorem \ref{T:main}(ii)]
Let the actual period of $\Lie(pk)$ be $l_p$ and the asserted period be $n_p$. By Theorem \ref{T:main}(i), we have $l_p\leq n_p$. On the other hand, Lemma \ref{L:period}(ii,iii) and Corollary \ref{C:Res} show that $l_p \geq n_p$.  Thus $l_p = n_p$ as desired.
%Let the actual period of $S^p(\Lie(k))$ be $l_p$ and the asserted period be $n_p$.  By Lemma \ref{L:F}, $n_p$ is the period of the trivial $\F\sym{p}$-module $\F$ and so there exists a long exact sequence of left $\F\sym{p}$-modules
%$$
%0\to \F \to P_{n_p} \to \dotsb \to P_1 \to \F \to 0,
%$$
%where $P_i$ is projective for all $i \in \{1,\dotsc, n_p\}$.  This induces the following long exact sequence of left $\F(\sym{p} \times C_k)$-modules:
%$$
%0\to \F \boxtimes \F_{\delta} \to P_{n_p} \boxtimes \F_{\delta} \to \dotsb \to P_1 \boxtimes \F_{\delta} \to \F \boxtimes \F_{\delta} \to 0.
%$$
%Since $\F_{\delta}$ is a projective $\F C_k$-module, the $\F(\sym{p} \times C_k)$-modules $P_i \boxtimes \F_{\delta}$ are projective for all $i \in \{1,\dotsc, n_p\}$.  Thus, $\F \boxtimes \F_{\delta}$ is periodic, and its period is bounded above by $n_p$.  Hence the period $l_p$ of $\Lambda_k \cong \Ind_{\sym{p} \times C_k}^{\sym{kp}} \F \boxtimes \F_{\delta}$ is bounded above by $n_p$ by Lemma \ref{L:period}(iv).  On the other hand, Lemma \ref{L:period}(iii) and Proposition \ref{C:Res} show that $l_p \geq n_p$.  Thus $l_p = n_p$ as desired.
\end{proof}

We end the paper with the following remarks.

\begin{rems} \hfill
\begin{enumerate}
\item One may use Theorem \ref{T:Res} to show that, when $p$ is odd, $\Omega^i(\Res^{\sym{pk}}_{\Delta_k(\sym{p})} \Lambda_k) \not\cong \Omega^j(\Res^{\sym{pk}}_{\Delta_k(\sym{p})} \Lambda_k)$ if $0 < |i-j| < 2p-2$.  This implies that 
    $$
    \Omega^i(\Lie(pk)) \cong  \Omega^{i+1}(\Lambda_k) \not\cong \Omega^{j+1}(\Lambda_k) \cong \Omega^j(\Lie(pk))$$ whenever $0 < |i-j| < 2p-2$.  Together with Theorem \ref{T:main}(i) (or Theorem \ref{T: Heller}), this gives an alternative proof of Theorem \ref{T:main}(ii).
\item If the conjecture made in \cite{ELT} on the complexities of the Lie modules holds true; namely, the complexity of the Lie module $\Lie(p^mk)$ is $m$ whenever $p\nmid k$, then $\Lie(pk)$ with $p \nmid k$ are the only ones with complexity $1$, and hence our results deal with {\em all} periodic Lie modules.
\end{enumerate}
\end{rems}


\begin{thebibliography}{99999}
%\bibitem[AM]{AAJM} A. Adem and R. J. Milgram, {\em Cohomology of Finite Groups}, Volume 309, Springer-Verlag 2004.
\bibitem[A]{A} J. Alperin, {\em Local Representation Theory}, Cambridge Studies in Advanced Mathematics \textbf{11}, Cambridge, 1986.

%\bibitem[AK]{AK} G. Arone and M. Kankaanrinta, `The homology of certain subgroups of the symmetric group
%with coefficients in Lie(n)', {\em J.\ Pure Appl.\ Algebra} \textbf{127} (1–-14), 1998.

\bibitem[AM]{AM} G. Arone and M. Mahowald, `The Goodwillie tower of the identity functor and the unstable
periodic homotopy of spheres', {\em Invent.\ Math.\ } \textbf{135} (743--788), 1999.

\bibitem[B]{B} D. J. Benson, {\em Representations and Cohomology: II}, Cambridge Studies in Advanced Mathematics \textbf{31}, Cambridge, 1991.

\bibitem[Ca]{Ca} J. F. Carlson, {\em Modules and Group Algebras}, Lectures in Mathematics ETH Z\"urich, Birk\"auser, 1996.

\bibitem[Co]{Co} F. Cohen, `The homology of $\mathcal{C}_{n+1}$-spaces, $n\geq 0$', in {\em Homology of Iterated Loop Spaces}, Lecture Notes in Mathematics \textbf{533}, Springer, 1976.
%\bibitem[BJ]{BDJ} R. Bryant and M. Johnson, `A modular version of Klyachko's theorem on Lie representations of the general linear group', {\em Math. Proc. Cambridge Philos. Soc.} \textbf{153} (79--98), 2012.

\bibitem[BLT]{BLT} R. M. Bryant, K. J. Lim and K. M. Tan, `Asymptotic behaviours of the Lie powers and Lie modules', {\em Q. J. Math.} \textbf{63} (845--853), 2012.

%\bibitem[BS]{BS} R. M. Bryant and M. Schocker, `The decomposition of Lie powers', {\em Proc. London Math. Soc. (3)} \textbf{93} (175--196), 2006.

\bibitem[DE]{SDKE} S. Donkin and K. Erdmann, Tilting modules, symmetric functions, and the module structure of the free Lie algebra, {\em J. Algebra} \textbf{203} (69--90), 1998.

%\bibitem[ET]{ET} K. Erdmann and K. M. Tan, `The Lie module of the symmetric group', {\em Int. Math. Res. Not.}, no. 19, (3763--3785), 2010.

\bibitem[ELT]{ELT}
K. Erdmann, K. J. Lim and K. M. Tan, `The complexity of the Lie module', {\em Proc. Edinb. Math. Soc. (2)} \textbf{57} (393--404), 2014.

\bibitem[ES]{ES}
K. Erdmann and M. Schocker, `Modular Lie powers and the Solomon descent algebra', {\em Math. Z.} \textbf{253} (295--313), 2006.

\bibitem[ET]{ET}
K. Erdmann and K. M. Tan, `The Lie module of the symmetric group', {\em Int. Math. Res. Not.} \textbf{2010}, no. 19, (3763--3785), 2010.

%\bibitem[G]{G}
%J. A. Green, `Walking around the Brauer tree', {\em J. Austral. Math. Soc.} \textbf{17} (1974), 197-213.

\bibitem[K]{K}
A. A. Klyachko, `Lie elements in the tensor algebra', %{\em Sib. Mat. Zh.} \textbf{15} (1296--1304), 1974 (Russian),
{\em Sib. Math. J.} \textbf{15} (914--921), 1975 (English).

\bibitem[R]{R} C. Reutenauer, {\em Free Lie Algebras}, Clarendon Press, Oxford, 1993.

\bibitem[SW]{SW}
P. Selick, J. Wu, `Natural coalgebra decomposition of tensor algebras and loop suspensions', {\em Mem. Amer. Math. Soc.} \textbf{148}, 2000.

%\bibitem[RH]{RH} Robin Hartshorne, {\em Algebraic Geometry}, Graduate Texts in Mathematics 52, Springer-Verlag 1977.

%\bibitem[SW2]{SW2}
%P. Selick, J. Wu, `Some calculations for $\Lie(n)^{\max}$ for low $n$', {\em J. Pure Appl. Algebra} \textbf{212} (2570--2580), 2008.

\bibitem[T]{T} J. Th\'evenaz, {\em $G$-Algebras and Modular Representation Theory}, Clarendon Press, Oxford, 1995.

\bibitem[We]{We} F. Wever, `\"{U}ber Invarianten in Lie'schen Ringen', {\em Math.\ Ann.} \textbf{120} (563--580), 1949.

\bibitem[Wi]{Wi} E. Witt, `Die Unterringe der freien Lieschen Ringe', {\em Math. Z.} \textbf{64}
(195--216), 1956.
\end{thebibliography}
\end{document}